\theoremstyle{plain}
\newtheorem{theorem}{Theorem}[section]
\newtheorem{remark}{Remark}[section]
\newtheorem{proposition}{Proposition}[section]
\theoremstyle{definition}
\newtheorem{definition}{Definition}
\theoremstyle{remark}
\long\def\symbolfootnote[#1]#2{\begingroup
	\def\thefootnote{\fnsymbol{footnote}}\footnote[#1]{#2}\endgroup}
\begin{document}
	\title[Anisotropic $(p,q)$-equations]
	{Anisotropic $(p,q)$-equations with convex and negative concave terms}
	\author[N.S. Papageorgiou, D.D. Repov\v{s} and C. Vetro]{Nikolaos S. Papageorgiou, Du\v{s}an D. Repov\v{s} and Calogero Vetro}
	\address[Nikolaos S. Papageorgiou]{Department of Mathematics, National Technical University, Zografou campus, 15780, Athens, Greece}
	\email{npapg@math.ntua.gr}
	\address[Du\v{s}an D. Repov\v{s}]{Faculty of Education and Faculty of Mathematics and Physics,
		University of Ljubljana \& Institute of Mathematics, Physics and Mechanics,
		SI-1000, Ljubljana, Slovenia}
	\email{dusan.repovs@guest.arnes.si}
	\address[Calogero Vetro]{Department of Mathematics and Computer Science, University of Palermo, Via Archirafi 34, 90123, Palermo, Italy}
	\email{calogero.vetro@unipa.it}
	
	\thanks{{\em 2020 Mathematics Subject Classification:} 35J20, 35J60.}
	
	\keywords{Variable Lebesgue and Sobolev spaces, variable $(p,q)$-operator, regularity theory, local minimizer, critical point theory.}
	\date{}
	\maketitle

\begin{abstract}
	We consider a parametric Dirichlet problem driven by the anisotropic $(p,q)$-Laplacian and with a reaction which exhibits the combined effects of a superlinear (convex) term and of a negative sublinear term. Using variational tools and critical groups we show that for all small values of the parameter, the problem has at least three nontrivial smooth solutions, two of which are of constant sign (positive and negative). 
\end{abstract}

\section{Introduction}\label{sec:1}

Let $\Omega \subseteq \mathbb{R}^N$ be a bounded domain with a $C^2$-boundary $\partial \Omega$. In this paper we study the following parametric anisotropic Dirichlet problem 
\begin{equation}\label{eq0}\tag{$P_\lambda$} 
\begin{cases}-\Delta_{p(z)} u(z)-\Delta_q u(z)= f(z,u(z))-\lambda |u(z)|^{\tau(z)-2}u(z)  & \mbox{in } \Omega,\\ u \Big|_{\partial \Omega} =0, \, 1< \tau(z)<q<p(z)<N \mbox{ for all }z \in \overline{\Omega}, \, \lambda>0.& \end{cases}
\end{equation}
Given $r \in  C^{0,1}(\overline{\Omega})$ (= the space of Lipschitz continuous functions on $\overline{\Omega}$) with $1< r_-=\min\limits_{\overline{\Omega}}r$, by    $\Delta_{r(z)}$ we denote the anisotropic $r$-Laplacian defined by
$$\Delta_{r(z)} u= \mbox{div }(|\nabla u|^{r(z)-2}\nabla u) \quad \mbox{ for all $u \in W_0^{1,r(z)}(\Omega)$ {\color{black}(see Section \ref{sec:2})}.}$$

If $r(\cdot)$ is constant, then we have the standard $r$-Laplacian denoted by $\Delta_r$. In problem \eqref{eq0} above, we have the sum of two such operators, one with variable exponent and the other with constant exponent. In the reaction (the right hand side of \eqref{eq0}), we have the combined effects of two distinct nonlinearities.

  One is the Carath\'eodory function $f(z,x)$ (that is, for all $x \in \mathbb{R}$, $z \to f(z,x)$ is measurable and for a.a. $z \in \Omega$, $x \to f(z,x)$ is continuous). We assume that $f(z,\cdot)$ is $(p_+-1)$-superlinear  ($p_+=\max\limits_{\overline{\Omega}}p$) but it {\color{black} needs} not satisfy the (common in such cases) Ambrosetti-Rabinowitz condition, see also Papageorgiou-R\u{a}dulescu-Repov\v{s} \cite{Ref12a} (Robin problem).  This term represents a ``convex'' contribution to the reaction. 

The other nonlinearity is the parametric function $x \to -\lambda |x|^{\tau(z)-2}x$ with $\tau \in C(\overline{\Omega})$ such that $1< \tau(z)<q$ for all $z \in \overline{\Omega}$. Therefore this term is $(q-1)$-sublinear (``concave'' term). Thus the reaction of \eqref{eq0} corresponds to a ``concave-convex'' problem, but with an essential difference. The concave (sublinear) term enters in the equation with a negative sign and this changes the geometry of the problem.

 In the past, problems with a negative concave term were studied by Perera \cite{Ref19}, de Paiva-Massa \cite{Ref9}, Papageorgiou-R\u{a}dulescu-Repov\v{s} \cite{Ref10} (Robin problems) for semilinear equations driven by the Laplacian, and by Papageorgiou-Winkert \cite{Ref16} for resonant $(p,2)$-equations. All the aforementioned works deal with isotropic equations and the perturbation $f(z,\cdot)$ is $(p-1)$-linear.

 Using variational tools from the critical point theory and critical groups {\color{black}(see Section \ref{sec:2})}, we show that for all sufficiently small $\lambda>0$, problem \eqref{eq0} has at least three nontrivial smooth solutions. Two of these solutions have constant sign (one is positive and the other negative). It is an interesting open question, whether this multiplicity theorem {\color{black}still holds} when the exponent $q$ is also variable and whether we can show that the third solution is nodal (sign-changing). 
 
 {\color{black} For the hypotheses $H_0$ and $H_1$ involved in our theorem, we refer to Section \ref{sec:2}. Also $C_+=\{u \in C_0^1(\overline{\Omega}): u(z)\geq 0 \mbox{ for all } z \in \overline{\Omega}\}$.  }
 
	\begin{theorem}	\label{th9} If hypotheses $H_0$ and $H_1$ hold, then for all sufficiently small $\lambda >0$, problem \eqref{eq0} has at least three nontrivial solutions $u_0 \in C_+ \setminus \{0\}$, $v_0 \in (-C_+) \setminus \{0\}$, and $y_0 \in C_0^1(\overline{\Omega})\setminus \{0\}$.
\end{theorem}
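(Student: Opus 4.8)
The plan is to combine minimax methods for two truncated energy functionals (to produce the constant-sign solutions) with Morse theory for the full functional (to extract the third solution). Throughout, the ambient space is $W_0^{1,p(z)}(\Omega)$ and the relevant energy functional is
\[
\varphi_\lambda(u) = \int_\Omega \frac{1}{p(z)}|\nabla u|^{p(z)}\,dz + \frac{1}{q}\|\nabla u\|_q^q + \lambda\int_\Omega\frac{1}{\tau(z)}|u|^{\tau(z)}\,dz - \int_\Omega F(z,u)\,dz,
\]
where $F(z,x)=\int_0^x f(z,s)\,ds$; note that the negative concave term in \eqref{eq0} enters $\varphi_\lambda$ with a \emph{positive} sign, which is the structural feature driving the argument. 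I would first record that $\varphi_\lambda\in C^1$ and that, since $f(z,\cdot)$ is $(p_+-1)$-superlinear, $\varphi_\lambda$ is unbounded below along every ray $tv$ with $v\in\mathrm{int}\,C_+$.

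First I would produce the constant-sign solutions. Introduce the truncated functionals $\varphi_\lambda^{\pm}$ obtained by replacing $f(z,u)$ and $|u|^{\tau(z)-2}u$ by their positive/negative truncations $f(z,\pm u^{\pm})$, etc. The near-origin behaviour encoded in $H_1$ (which makes the potential $F$ subordinate near $0$ to the concave term), together with the positive sign of the concave contribution, shows that for all small $\lambda>0$ the point $u=0$ is a strict local minimizer of each $\varphi_\lambda^{\pm}$ (and of $\varphi_\lambda$). Since $\varphi_\lambda^{\pm}$ is unbounded below, the mountain pass geometry holds; the weak, Ambrosetti--Rabinowitz-free superlinearity in $H_1$ (in the spirit of Papageorgiou--R\u{a}dulescu--Repov\v{s} \cite{Ref12a}) is then used to verify the Cerami condition. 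The mountain pass theorem yields critical points $u_0,v_0$ of $\varphi_\lambda^{\pm}$; truncation and the sign of the concave term force $u_0\ge 0$, $v_0\le 0$, and the anisotropic regularity theory of Section~\ref{sec:2} together with the strong maximum principle upgrade these to $u_0\in\mathrm{int}\,C_+$ and $v_0\in-\mathrm{int}\,C_+$, both solving \eqref{eq0}.

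Next I would set up the Morse-theoretic count for the untruncated $\varphi_\lambda$. Because $u=0$ is a local minimizer, $C_k(\varphi_\lambda,0)=\delta_{k,0}\mathbb{Z}$. Since $u_0\in\mathrm{int}\,C_+$, the truncation is inactive on a neighborhood of $u_0$ in $C_+$, so $C_k(\varphi_\lambda,u_0)=C_k(\varphi_\lambda^{+},u_0)$, and the mountain pass characterization gives $C_1(\varphi_\lambda^{+},u_0)\ne 0$; I would push this to the sharp statement $C_k(\varphi_\lambda,u_0)=\delta_{k,1}\mathbb{Z}$ (and symmetrically for $v_0$) via the second deformation theorem applied at a mountain pass point. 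Finally, the superlinearity of $f$ makes the far-out sublevel sets of $\varphi_\lambda$ contractible, whence $C_k(\varphi_\lambda,\infty)=0$ for all $k$. If $0,u_0,v_0$ were the only critical points, the Morse relation $\sum_u\sum_k(-1)^k\,\mathrm{rank}\,C_k(\varphi_\lambda,u)=\sum_k(-1)^k\,\mathrm{rank}\,C_k(\varphi_\lambda,\infty)$ would give $1-1-1=-1$ on the left against $0$ on the right, a contradiction; hence there exists a further critical point $y_0\notin\{0,u_0,v_0\}$, the desired third nontrivial solution, smooth by regularity.

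I expect the main obstacle to be the critical-group bookkeeping rather than the existence of the constant-sign solutions. Two points are delicate: verifying the Cerami condition for the Ambrosetti--Rabinowitz-free superlinear term in the variable-exponent setting, and, above all, pinning down $C_k(\varphi_\lambda,u_0)=C_k(\varphi_\lambda,v_0)=\delta_{k,1}\mathbb{Z}$ exactly. The latter hinges on transferring critical groups from the truncated to the full functional at a point of $\mathrm{int}\,C_+$, which in turn relies on the equivalence of local minimizers in the $C_0^1(\overline{\Omega})$ and $W_0^{1,p(z)}(\Omega)$ topologies for the anisotropic $(p,q)$-operator; establishing (or citing) this equivalence, and likewise justifying $C_k(\varphi_\lambda,\infty)=0$ under the weak superlinearity hypothesis, are where the real work lies.
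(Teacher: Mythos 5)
Your route to the two constant-sign solutions (mountain pass for the truncated functionals $\varphi_\lambda^\pm$, based at $0$ as a strict local minimizer, plus the C-condition without AR) would indeed produce nontrivial solutions $u_0\geq 0$, $v_0\leq 0$ of \eqref{eq0} lying in $C_+\setminus\{0\}$ after regularity. The pivotal step that fails is the very next one: ``the strong maximum principle upgrades these to $u_0\in \mathrm{int}\,C_+$''. This is precisely the point the paper flags at the end of Section \ref{sec:1} as unavailable here. For $x\geq 0$ small, hypothesis $H_1\,(iv)$ gives $f(z,x)=O(x^{q-1})$, so the reaction $f(z,x)-\lambda x^{\tau(z)-1}$ is dominated by the negative term $-\lambda x^{\tau(z)-1}$ with $\tau(z)-1<q-1<p(z)-1$; near a zero of $u_0$ the right-hand side is of order $-\lambda u_0^{\tau-1}$, which is far more negative than the $-c\,u_0^{p-1}$ threshold tolerated by the V\'azquez/Pucci--Serrin strong maximum principle (the condition $\int_0^\delta (sB(s))^{-1/p}\,ds=+\infty$ with $B(s)\sim \lambda s^{\tau}/\tau$ fails whenever $\tau_++1<p_-$, a case allowed by $H_0$). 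A nonnegative solution may therefore vanish inside $\Omega$, and one can only conclude $u_0\in C_+\setminus\{0\}$ --- which is exactly all that Theorem \ref{th9} claims.

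Everything downstream then collapses. Without $u_0\in\mathrm{int}\,C_+$ you cannot transfer critical groups from $\varphi_\lambda^+$ to $\varphi_\lambda$ (and even with it, the two functionals never agree on any $W_0^{1,p(z)}(\Omega)$-neighborhood of $u_0$, so a homotopy-invariance argument would still be required); the sharp computation $C_k(\varphi_\lambda,u_0)=\delta_{k,1}\mathbb{Z}$ at a mountain-pass point is not available for a merely $C^1$ functional without extra local structure; and the Morse identity additionally needs $C_k(\varphi_\lambda,\infty)=0$, which you do not establish. The paper sidesteps all of this: it produces $u_0,v_0$ not as mountain-pass points but as minimizers of $\varphi_\lambda^\pm$ over a small ball (Propositions \ref{prop4} and \ref{prop5} force a nonzero interior minimizer at negative energy), and then shows \emph{directly} that these are local minimizers of the full $\varphi_\lambda$, exploiting the positive sign of the concave term in the energy: for $u$ close to $u_0$ in $C_0^1(\overline{\Omega})$ one has $\varphi_\lambda(u)-\varphi_\lambda^+(u)\geq \frac{\lambda}{\tau_+}\rho_\tau(u^-)-\int_\Omega F(z,-u^-)\,dz\geq 0$ because $\|u^-\|_\infty$ is small and $\tau_+<q\leq r_+$. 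Hence $C_k(\varphi_\lambda,u_0)=C_k(\varphi_\lambda,v_0)=\delta_{k,0}\mathbb{Z}$, and the third solution is a mountain-pass point of the \emph{full} functional $\varphi_\lambda$ (based at the local minimizer $0$), distinguished from $u_0,v_0$ by $C_1(\varphi_\lambda,y_0)\neq 0$ alone and from $0$ by $\varphi_\lambda(y_0)>0$. If you want to salvage your argument, the effort should go into this local-minimizer comparison rather than into the strong maximum principle and the $\delta_{k,1}$ bookkeeping.
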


To have a more complete picture of the relevant literature, we mention that the standard isotropic concave-convex problems (the concave term having a positive sign), were first considered by Ambrosetti-Brezis-Cerami \cite{Ref1} for semilinear equations driven by Dirichlet Laplacian. Their work was extended to nonlinear equations driven by the $p$-Laplacian by Garcia Azorero-Peral Alonso-Manfredi \cite{Ref5}. Since then appeared several works with further generalizations.  {\color{black} Just to quote a few} we mention the works of 
 Gasi\'nski-Papageorgiou \cite{Ref7,Ref8}, Papageorgiou-Repov\v{s}-Vetro \cite{Ref13a,Ref13b}, Papageorgiou-Vetro-Vetro \cite{Ref14,Ref15}, Papageorgiou-Winkert \cite{Ref17}, and the recent papers of Papageorgiou-Qin-R\u{a}dulescu  \cite{Ref9a} and Papageorgiou-R\u{a}dulescu-Repov\v{s} \cite{Ref12} on anisotropic equations. {\color{black} In all these works the concave term enters in the equation with a positive sign and this permits the use of the strong maximum principle which provides more structural information concerning the solution. This extra information allows us to use the result relating Sobolev and H\"{o}lder minimizers. In the present setting this is no longer possible and the geometry changes requiring a new approach.}

\section{Preliminaries}\label{sec:2}

The analysis of problem \eqref{eq0} uses variable Lebesgue and Sobolev spaces. A detailed presentation of these spaces can be found in the books of Cruz Uribe-Fiorenza \cite{Ref2} and of Diening-Hajulehto-H\"ast\"o-R\.{u}\v{z}i\v{c}ka \cite{Ref3}. 

Let $E_1=\{r \in C(\overline{\Omega}) \, : \, 1<r_- {\color{black}=\min\limits_{\overline{\Omega}} r \}}$.  {\color{black} In general, for} any $r \in E_1$, we set 
$$r_-=\min\limits_{\overline{\Omega}}r \mbox{ and } r_+=\max\limits_{\overline{\Omega}}r .$$

Also let $M(\Omega)=\{u : \Omega \to \mathbb{R} \mbox{ measurable}\}$. We identify two such functions which differ only on a Lebesgue null set. Given $r \in E_1$, we define the variable Lebesgue space $L^{r(z)}(\Omega)$ by
$$ L^{r(z)}(\Omega)=\left\{ u \in M(\Omega) :  \int_\Omega |u(z)|^{r(z)}dz <  {\color{black}+ \infty }  \right\}.  $$

This  space is equipped with the so-called ``Luxemburg norm'', defined by
$$\|u \|_{r(z)}= \inf \left\{\lambda >0  :   \int_\Omega\left(\frac{|u(z)|}{\lambda}\right)^{r(z)}dz\leq 1  \right\}.$$

 {\color{black} The space $L^{r(z)}(\Omega)$ endowed with this norm} becomes a Banach space which is separable and uniformly convex (hence reflexive)  {\color{black} (see \cite{Ref3}, p. 67)}.  For $r \in E_1$ by $r^\prime(\cdot)$
 we denote the variable conjugate exponent to $r(\cdot)$, that is, $\frac{1}{r(z)}+\frac{1}{r^\prime (z)}=1$ for all $z \in \overline{\Omega}$. Evidently,  $r^\prime \in E_1$ and
 $$ {\color{black}(L^{r(z)}(\Omega))^\ast}=L^{r^\prime(z)}(\Omega).$$
 
 Moreover, we have a  H\"older-type inequality, namely
$$\int_\Omega |u(z)v(z)|dz \leq \left[\frac{1}{r_-}+\frac{1}{r^\prime_-}\right]\|u\|_{r(z)}\|v\|_{r^\prime(z)} \mbox{ for all $u \in L^{r(z)}(\Omega)$, $v \in L^{r^\prime(z)}(\Omega)$ }$$
{\color{black} (see \cite{Ref2}, p. 27)}.
In addition, if $r, \widehat{r} \in E_1$ and $r(z) \leq \widehat{r}(z)$ for all $z \in \overline{\Omega}$, then $L^{\widehat{r}(z)}(\Omega) \hookrightarrow L^{r(z)}(\Omega)$ continuously {\color{black} (see \cite{Ref2}, pp. 37-38)}.

Using the variable Lebesgue spaces, we can define the corresponding variable  Sobolev spaces. {\color{black} Taken} $r \in E_1$, then 
$$ W^{1,r(z)}(\Omega)=\left\{ u \in L^{r(z)}(\Omega) :  |\nabla u| \in L^{r(z)}(\Omega)    \right\},  $$
{\color{black} where $\nabla u$ denotes} the weak gradient of $u$. This space is equipped with the norm 
$$\|u\|_{1,r(z)}=\|u\|_{r(z)}+ \|\nabla u\|_{r(z)}\quad \mbox{for all $u \in W^{1,r(z)}(\Omega)$,}$$
with $\|\nabla u\|_{r(z)}=\| |\nabla u| \|_{r(z)}$. If $r \in E_1 \cap C^{0,1}(\overline{\Omega})$, then we define also 
$$ W_0^{1,r(z)}(\Omega)=\overline{C_c^\infty(\Omega)}^{\|\cdot\|_{1,r(z)}}.$$

Both $ W^{1,r(z)}(\Omega)$ and $ W_0^{1,r(z)}(\Omega)$ are Banach spaces which are separable and uniformly convex (thus reflexive) {\color{black} (see \cite{Ref3}, p. 245)}. The critical Sobolev exponent $r^\ast(\cdot)$ is defined by 
$$r^\ast(z)=\begin{cases} \dfrac{Nr(z)}{N-r(z)} & \mbox{if } r(z) <N,\\ +\infty & \mbox{if }N \leq r(z).\end{cases}$$

For $r,p \in C(\overline{\Omega})$ with $1<r_-,p_+<N$ and $1 \leq p(z) \leq r^\ast(z)$ for all $z \in \overline{\Omega}$ (resp. $1 \leq p(z) < r^\ast(z)$ for all $z \in \overline{\Omega}$), then we have
$$ W^{1,r(z)}(\Omega) \hookrightarrow L^{p(z)}(\Omega) \mbox{ continuously}$$
$$ (\mbox{resp. }W^{1,r(z)}(\Omega) \hookrightarrow L^{p(z)}(\Omega) \mbox{ compactly}),$$
{\color{black} (see \cite{Ref3}, p. 259)}.
The same embeddings are also valid for $W_0^{1,r(z)}(\Omega)$. We mention that on $W_0^{1,r(z)}(\Omega)$ ($r \in  C^{0,1}(\overline{\Omega})$), the  Poincar\'e inequality holds. {\color{black} Recall that the Poincar\'e inequality says that there exists $c=c(\Omega)>0$ such that $\|u\|_{r(z)}\leq c\| \nabla u\|_{r(z)}$ for all $u \in W_0^{1,r(z)}(\Omega)$ (see \cite{Ref3}, p. 249).} So, on
 $W_0^{1,r(z)}(\Omega)$ we can use the following norm 
$$\|u\|=  \|\nabla u\|_{r(z)}\quad \mbox{for all $u \in W_0^{1,r(z)}(\Omega)$.}$$

In what follows, we shall denote
by $\rho_r(\cdot)$  the modular function 
$$\rho_r(u)=\int_\Omega |u(z)|^{r(z)}dz \quad \mbox{for all $u \in L^{r(z)}(\Omega)$.}$$

If $u \in W^{1,r(z)}(\Omega)$ or $u \in W_0^{1,r(z)}(\Omega)$, then $\rho_r(\nabla u)=\rho_r(|\nabla u|)$. The norm $\|\cdot \|_{r(z)}$ and the modular function $\rho_r(\cdot)$  are closely related {\color{black} (see \cite{RefFD}, Proposition 2.1) }.

\begin{proposition}
	\label{prop1} If $r \in E_1$ and $u  \in L^{r(z)}(\Omega) \setminus \{0\}$, then the following statements hold:
\begin{itemize}
	\item[(a)]	$\|u\|_{r(z)}=\theta \Leftrightarrow \rho_r\left(\frac{u}{\theta}\right)=1$ for all $\theta>0$;\\
		\item[(b)]	$\|u\|_{r(z)}< 1 \mbox{ (resp. $=1$, $>1$)} \Leftrightarrow \rho_r(u)< 1 \mbox{ (resp. $=1$, $>1$)}$;\\
		\item[(c)] $\|u\|_{r(z)}<1 \Rightarrow  \|u\|_{r(z)}^{r_+}\leq \rho_r(u) \leq  \|u\|_{r(z)}^{r_-}$;\\ 	\item[(d)] $\|u\|_{r(z)}>1 \Rightarrow  \|u\|_{r(z)}^{r_-}\leq \rho_r(u) \leq  \|u\|_{r(z)}^{r_+}$;\\
			\item[(e)]  $\|u\|_{r(z)}\to 0  \mbox{ (resp. $\|u\|_{r(z)}\to {\color{black} +\infty}$)}\Leftrightarrow \rho_r(u)\to 0 \mbox{ (resp. $\rho_r(u)\to {\color{black}+\infty}$)}$.
\end{itemize}	
\end{proposition}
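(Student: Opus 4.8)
My plan is to reduce all five assertions to a single one-variable study. The heart of the matter is to analyze, for a fixed $u\in L^{r(z)}(\Omega)\setminus\{0\}$, the real function $\varphi(\lambda)=\rho_r(u/\lambda)=\int_\Omega(|u(z)|/\lambda)^{r(z)}\,dz$ on $(0,+\infty)$; every one of (a)--(e) will be a corollary of its qualitative behaviour. First I would check that $\varphi$ is finite-valued: for $\lambda\ge 1$ one has $(|u|/\lambda)^{r(z)}\le|u|^{r(z)}$, while for $0<\lambda<1$ one has $(|u|/\lambda)^{r(z)}=\lambda^{-r(z)}|u|^{r(z)}\le\lambda^{-r_+}|u|^{r(z)}$, so $\varphi(\lambda)\le\max(1,\lambda^{-r_+})\rho_r(u)<+\infty$ since $u\in L^{r(z)}(\Omega)$. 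Next, on any compact subinterval $[a,b]\subset(0,+\infty)$ the integrand is bounded by the fixed $L^1$-function $\max(a^{-r_-},a^{-r_+})|u|^{r(z)}$ and is continuous in $\lambda$ for a.a.\ $z$, so the dominated convergence theorem gives continuity of $\varphi$. Strict monotonicity is pointwise: for $\lambda_1<\lambda_2$ the integrand strictly decreases on the positive-measure set $\{u\ne 0\}$, whence $\varphi(\lambda_1)>\varphi(\lambda_2)$. Finally $\lim_{\lambda\to0^+}\varphi(\lambda)=+\infty$ by the monotone convergence theorem (the integrand increases to $+\infty$ on $\{u\ne0\}$) and $\lim_{\lambda\to+\infty}\varphi(\lambda)=0$ by dominated convergence again. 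This step is the only genuine work and the main obstacle, since all the measure-theoretic care (finiteness, the dominating functions, the two limits) lives here.

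Granting these facts, (a) is immediate: by continuity, strict monotonicity and the two limits there is a unique $\lambda_0>0$ with $\varphi(\lambda_0)=1$, and the sublevel set $\{\lambda>0:\varphi(\lambda)\le1\}$ equals $[\lambda_0,+\infty)$; hence the infimum defining the Luxemburg norm is attained and equals $\lambda_0$, so $\|u\|_{r(z)}=\lambda_0$ and $\rho_r(u/\|u\|_{r(z)})=1$, while the converse direction is just the uniqueness of $\lambda_0$. For (b) I would compare $\varphi(1)=\rho_r(u)$ with $\varphi(\lambda_0)=1$ using strict monotonicity: $\rho_r(u)<1$ (resp.\ $=1$, $>1$) is equivalent to $\varphi(1)<\varphi(\lambda_0)$ (resp.\ $=$, $>$), i.e.\ to $1>\lambda_0=\|u\|_{r(z)}$ (resp.\ $=$, $<$).

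For (c) and (d), set $\theta=\|u\|_{r(z)}$ and write, using (a), $\rho_r(u)=\int_\Omega\theta^{r(z)}(|u(z)|/\theta)^{r(z)}\,dz$ against the normalization $\int_\Omega(|u|/\theta)^{r(z)}\,dz=1$. When $\theta<1$ the map $s\mapsto\theta^{s}$ is decreasing, so $\theta^{r_+}\le\theta^{r(z)}\le\theta^{r_-}$, and integrating yields $\|u\|_{r(z)}^{r_+}\le\rho_r(u)\le\|u\|_{r(z)}^{r_-}$; when $\theta>1$ the map $s\mapsto\theta^{s}$ is increasing, giving the reversed exponent order and hence (d). Part (e) then follows from (c), (d) and (b) with no further analysis: if $\|u\|_{r(z)}\to0$ then eventually $\|u\|_{r(z)}<1$ and $\rho_r(u)\le\|u\|_{r(z)}^{r_-}\to0$, whereas $\rho_r(u)\to0$ forces, via (b), $\|u\|_{r(z)}<1$ eventually and $\|u\|_{r(z)}^{r_+}\le\rho_r(u)\to0$; the behaviour at $+\infty$ is symmetric, using (d) and the bounds $\|u\|_{r(z)}^{r_-}\le\rho_r(u)\le\|u\|_{r(z)}^{r_+}$. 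Thus the entire proposition collapses onto the qualitative analysis of $\varphi$ performed in the first step.
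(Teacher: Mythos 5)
Your argument is correct. Note, however, that the paper does not prove this proposition at all: it is quoted as a known fact with a reference to Fan--Deng \cite{RefFD}, Proposition 2.1, so there is no in-text proof to compare against. Your reduction of all five parts to the qualitative study of $\lambda\mapsto\rho_r(u/\lambda)$ (finiteness, continuity by dominated convergence, strict decrease on $\{u\neq0\}$, and the limits $+\infty$ at $0^+$ and $0$ at $+\infty$) is the standard route to these modular--norm relations, and the deductions of (a)--(e) from it — in particular the attainment of the Luxemburg infimum and the pointwise comparison $\theta^{r_+}\leq\theta^{r(z)}\leq\theta^{r_-}$ (or its reverse) integrated against the normalized modular — are all sound. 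The only thing worth flagging is cosmetic: in part (e) the statement is really about sequences (or nets) $u_n$, so the bounds from (b)--(d) are applied to each term separately, which you implicitly do; making that explicit would remove any ambiguity.
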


We know that for $r \in E_1 \cap C^{0,1}(\overline{\Omega})$, we have
$$W_0^{1,r(z)}(\Omega)^\ast=W^{-1,r^\prime(z)}(\Omega) \quad \mbox{{\color{black} (see \cite{Ref3}, pp. 378-379)}}.$$

Consider the operator $A_{r(z)}: W_0^{1,r(z)}(\Omega) \to  W^{-1,r^\prime(z)}(\Omega)$  defined by
\begin{equation}\label{eq-Az}\langle A_{r(z)}(u),h \rangle = \int_\Omega |\nabla u(z)|^{r(z)-2}(\nabla u, \nabla h)_{\mathbb{R}^N}dz \quad \mbox{for all } u,h \in W_0^{1,r(z)}(\Omega),\end{equation}
 {\color{black} where $(\cdot, \cdot)_{\mathbb{R}^N}$ is the inner product in $\mathbb{R}^N$. This operator has the following properties (see \cite{RefFZ}, Proposition 2.9).}

\begin{proposition}\label{prop2}
If $r \in E_1 \cap C^{0,1}(\overline{\Omega})$, then the operator $A_{r(z)}(\cdot)$ is  bounded $($that is, it maps bounded sets to bounded sets$)$, continuous, strictly monotone $($thus also maximal monotone$)$ and of type $(S)_+$  $($that is, $u_n  \xrightarrow{w} u $ in $W_0^{1,r(z)}(\Omega)$ and $\limsup\limits_{n \to \infty} \langle A_{r(z)}(u_n), u_n - u \rangle \leq 0$ imply that $u_n \to u$ in $W_0^{1,r(z)}(\Omega)).$ 
\end{proposition}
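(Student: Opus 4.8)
The plan is to verify the four properties in turn, using throughout the interplay between the norm $\|\cdot\|_{r(z)}$ and the modular $\rho_r(\cdot)$ recorded in Proposition~\ref{prop1}. For boundedness I would start from \eqref{eq-Az} and estimate
$$|\langle A_{r(z)}(u),h\rangle|\leq\int_\Omega|\nabla u|^{r(z)-1}|\nabla h|\,dz.$$
Since $(r(z)-1)r'(z)=r(z)$, the function $|\nabla u|^{r(z)-1}$ lies in $L^{r'(z)}(\Omega)$ with modular $\rho_{r'}(|\nabla u|^{r(z)-1})=\rho_r(\nabla u)$, so the variable-exponent H\"older inequality gives $|\langle A_{r(z)}(u),h\rangle|\leq c\,\||\nabla u|^{r(z)-1}\|_{r'(z)}\|h\|$, and Proposition~\ref{prop1}(c),(d) bounds the first factor by a polynomial expression in $\|u\|$, yielding boundedness.

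For strict monotonicity I would invoke the elementary inequality for the vector field $\xi\mapsto|\xi|^{t-2}\xi$ on $\mathbb{R}^N$, namely $(|\xi|^{t-2}\xi-|\eta|^{t-2}\eta,\xi-\eta)_{\mathbb{R}^N}>0$ whenever $\xi\neq\eta$ and $t>1$. Applied pointwise with $t=r(z)$, this shows the integrand of $\langle A_{r(z)}(u)-A_{r(z)}(v),u-v\rangle$ is nonnegative and strictly positive on $\{\nabla u\neq\nabla v\}$, so the pairing is positive when $u\neq v$. Continuity I would read off from the continuity of the Nemytskii map $w\mapsto|w|^{r(z)-2}w$ from $L^{r(z)}(\Omega;\mathbb{R}^N)$ into $L^{r'(z)}(\Omega;\mathbb{R}^N)$, again combined with H\"older's inequality. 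A strictly monotone, everywhere-defined, continuous (hence hemicontinuous) operator on a reflexive Banach space is maximal monotone by the Browder--Minty theorem, which settles that assertion.

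The real work is the $(S)_+$ property. Suppose $u_n\xrightarrow{w}u$ in $W_0^{1,r(z)}(\Omega)$ and $\limsup_{n\to\infty}\langle A_{r(z)}(u_n),u_n-u\rangle\leq0$. By monotonicity $\langle A_{r(z)}(u_n)-A_{r(z)}(u),u_n-u\rangle\geq0$, and since $\langle A_{r(z)}(u),u_n-u\rangle\to0$ by the weak convergence, I would deduce that $\langle A_{r(z)}(u_n),u_n-u\rangle\to0$ and hence
$$0\leq\langle A_{r(z)}(u_n)-A_{r(z)}(u),u_n-u\rangle=\int_\Omega e_n\,dz\to0,$$
where $e_n(z)=(|\nabla u_n|^{r(z)-2}\nabla u_n-|\nabla u|^{r(z)-2}\nabla u,\nabla u_n-\nabla u)_{\mathbb{R}^N}\geq0$.

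The delicate final step, which I expect to be the main obstacle, is converting this $L^1$-smallness of $e_n$ into convergence in the variable-exponent norm: unlike the constant-exponent case one cannot read off norm convergence directly, because the relation between $\|\cdot\|_{r(z)}$ and $\rho_r$ is governed by the genuinely nonlinear two-sided estimates of Proposition~\ref{prop1}(c),(d). My plan is to exploit the elementary monotonicity inequalities for $\xi\mapsto|\xi|^{t-2}\xi$, which, treating the ranges $r(z)\geq2$ and $1<r(z)<2$ separately, bound $|\nabla u_n-\nabla u|^{r(z)}$ pointwise in terms of $e_n(z)$ and the gradients; integrating and applying H\"older's inequality on the troublesome range lets me dominate the modular $\rho_r(\nabla u_n-\nabla u)$ by a quantity tending to $0$ together with $\int_\Omega e_n\,dz$. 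Once $\rho_r(\nabla u_n-\nabla u)\to0$, Proposition~\ref{prop1}(e) gives $\|u_n-u\|=\|\nabla u_n-\nabla u\|_{r(z)}\to0$, which is precisely the $(S)_+$ conclusion.
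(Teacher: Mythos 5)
Your outline is correct, but note that the paper itself offers no proof of Proposition~\ref{prop2}: it is quoted from Fan--Zhao \cite{RefFZ} (Proposition 2.9), so there is no in-paper argument to compare against. What you propose is essentially the standard proof found in that reference and elsewhere. All four steps are sound: the boundedness estimate via $(r(z)-1)r'(z)=r(z)$ and Proposition~\ref{prop1}(c),(d) is exactly right; strict monotonicity follows pointwise from the strict monotonicity of $\xi\mapsto|\xi|^{t-2}\xi$ (together with the Poincar\'e inequality, so that $u\neq v$ forces $\nabla u\neq\nabla v$ on a set of positive measure); and maximal monotonicity of an everywhere-defined, monotone, hemicontinuous operator on a reflexive space is the standard Minty-type lemma. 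For the $(S)_+$ property, your reduction to $\int_\Omega e_n\,dz\to 0$ is correct, and the splitting of $\Omega$ into $\{r(z)\geq 2\}$ and $\{1<r(z)<2\}$ with the two elementary inequalities $(|\xi|^{t-2}\xi-|\eta|^{t-2}\eta,\xi-\eta)_{\mathbb{R}^N}\geq c\,|\xi-\eta|^t$ (for $t\geq 2$) and $\geq c\,|\xi-\eta|^2(|\xi|+|\eta|)^{t-2}$ (for $t<2$) does give $\rho_r(\nabla u_n-\nabla u)\to 0$, since the constants can be taken uniform for $t\in[r_-,r_+]\subset(1,\infty)$ and the sequence $\{\nabla u_n\}$ has bounded modular; Proposition~\ref{prop1}(e) then finishes. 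The only assertions left genuinely unproved in your sketch are the continuity of the Nemytskii map $w\mapsto |w|^{r(z)-2}w$ from $L^{r(z)}(\Omega;\mathbb{R}^N)$ to $L^{r'(z)}(\Omega;\mathbb{R}^N)$ (true, but it is a theorem in the variable-exponent setting, not a triviality, and deserves a citation) and the uniformity in $z$ of the constants in the two vector inequalities, which you should state explicitly when carrying out the H\"older step on $\{1<r(z)<2\}$.
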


Let $X$ be a Banach space, $\varphi \in C^1(X,\mathbb{R})$  and $c \in \mathbb{R}$. We set
\begin{align*}& K_\varphi  = \{u \in X : \varphi'(u) =0 \} \quad \mbox{(the critical set of $\varphi$),}\\
&	\varphi^c  = \{u \in X : \varphi(u) \leq c \}.\end{align*}

Let $(Y_1,Y_2)$ be a topological pair such that  $Y_2 \subseteq Y_1 \subseteq X$ and $k \in \mathbb{N}_0$. By $H_k(Y_1,Y_2)$ we denote the $k^{th}$-relative singular homology group with integer coefficients. If $u \in K_\varphi$ is isolated and $c=\varphi(u)$, then the critical groups of $\varphi$ at $u$ are defined by
$$C_k(\varphi,u)=H_k(\varphi^c \cap U, \varphi^c \cap U \setminus \{u\}) \quad \mbox{for all } k \in \mathbb{N}_0,$$
with $U$ a neighborhood of $u$ such that $K_\varphi \cap \varphi^c \cap U  = \{u\}$ {\color{black} (see \cite{Ref11}, Chapter 6)}. The excision property of singular homology, implies that the above definition is independent of the choice of the  isolating neighborhood $U$. For details we refer to Papageorgiou-R\u{a}dulescu-Repov\v{s} \cite{Ref11}, {\color{black} Chapter 6, where the reader can find explicit computations of the critical groups for various kinds of critical points.}

\section{Conditions and hypotheses}\label{h}

\begin{definition}
We say that $\varphi \in C^1(X,\mathbb{R})$ satisfies the {\it C-condition}, if it has the following property: 
Every sequence $\{u_n\}_{n \in \mathbb{N}} \subseteq X$ such that 
\begin{itemize}
\item $\{\varphi(u_n)\}_{n \in \mathbb{N}} \subseteq \mathbb{R}$ is bounded; and 
\item $(1 + \|u_n\|_X) \varphi'(u_n) \to 0 \mbox{ in }X^\ast \mbox{  as } n \to \infty$ {\color{black} ($X^\ast$ denotes the dual of $X$),}
\end{itemize}
admits a strongly convergent subsequence {\color{black} (see \cite{Ref11}, p. 366).} 
\end{definition}

Our hypotheses on the data of problem \eqref{eq0} will be the following:

\medskip
\noindent $H_0$: $p \in C^{0,1}(\overline{\Omega})$,  $\tau \in C(\overline{\Omega})$ and $1<\tau(z)<q<p(z) <N$ for all $z \in \overline{\Omega}$. 
\\
\noindent $H_1$: $f:\Omega \times \mathbb{R} \to \mathbb{R}$ is a Carath\'eodory function such that $f(z,0)=0$ for a.a. $z \in \Omega$ and 
\begin{itemize}
	\item[$(i)$] $|f (z,x)| \leq a(z) [1+|x|^{r(z)-1}]$ for a.a. $z \in \Omega$, all $x \in \mathbb{R}$, with $a \in L^\infty(\Omega)$, $ r \in C(\overline{\Omega})$ with $p(z)< r(z) < p_-^\ast$ for all $z \in \overline{\Omega}$;
	\item[$(ii)$] if $F(z,x)=\int_0^x f(z,s)ds$, then
	$\lim\limits_{x \to \pm \infty} \frac{F(z,x)}{|x|^{p_+}}=+\infty$ uniformly for a.a. $z \in \Omega$;
		\item[$(iii)$] 
	there exist $\mu \in C(\overline{\Omega})$ with
	$\mu(z)\in \left( (r_+-p_-)\frac{N}{p_-},p^\ast_-\right)$ for all $z \in \overline{\Omega}$, $\tau_+<\mu_-$ and {\color{black} a constant $\beta_0>0$ such that}
		$$\beta_0 \leq \liminf_{x \to \pm \infty} \dfrac{f(z,x)x-p_+F(z,x)}{|x|^{\mu(z)}}\mbox{ uniformly for a.a. $z \in \Omega$;}$$
	\item[$(iv)$] there exist $\eta \in L^\infty(\Omega)$ and $\widehat{\eta}>0$ such that
	\begin{align*} & \widehat{\lambda}_1(q)\leq \eta(z)\mbox{ for a.a. $z \in \Omega$, $\eta \not \equiv \widehat{\lambda}_1(q)$,}\\
		& \eta(z) \leq \liminf_{x \to 0} \dfrac{q F(z,x)}{|x|^{q}}  \leq \limsup_{x \to 0} \dfrac{q F(z,x)}{|x|^{q}}\leq \widehat{\eta}\mbox{ uniformly for a.a. $z \in \Omega$,}
	\end{align*}	
	(by $\widehat{\lambda}_1(q)$ we denote the principal eigenvalue of $(-\Delta_q, W_0^{1,q}(\Omega))$; we know $\widehat{\lambda}_1(q)>0$, see \cite{Ref6}, {\color{black} p. 741}).
\end{itemize}

\begin{remark}
Hypotheses $H_1\,(ii),(iii)$ imply that for a.a. $z \in \Omega$, $f(z,\cdot)$ is $(p_+-1)$-superlinear. We do not employ the AR-condition and this way we incorporate in our framework superlinear nonlinearities with ``slower'' growth as $x \to \pm \infty$. The following function satisfies hypothesis $H_1$ but it fails to satisfy the AR-condition:
$$f(z,x)=\begin{cases} \eta[ |x|^{q-2}x - |x|^{\theta(z)-2}x] & \mbox{if }|x|\leq1,\\
|x|^{p_+-2}x \ln |x|  & \mbox{if }1<|x|,
\end{cases}$$
with $\theta \in C(\overline{\Omega})$ and $ q < \theta(z)$ for all $z \in \overline{\Omega}$. 
\end{remark}

For $\lambda>0$, let $\varphi_\lambda : W_0^{1,p(z)}(\Omega) \to \mathbb{R}$ be the energy functional for problem \eqref{eq0} defined by 
$$ \varphi_\lambda(u)= \int_\Omega \frac{1}{p(z)}|\nabla u(z)|^{p(z)}dz + \frac{1}{q}\|\nabla u\|_q^q + \int_\Omega \frac{\lambda}{\tau(z)}|  u(z)|^{\tau(z)}dz -  \int_\Omega F(z,u)dz$$
for all $u \in W^{1,p(z)}_0(\Omega)$. Evidently, $\varphi_\lambda \in C^1(W_0^{1,p(z)}(\Omega))$. 

We also introduce the positive and negative truncations of $\varphi_\lambda(\cdot)$, namely the $C^1$-functionals $\varphi_\lambda^\pm : W_0^{1,p(z)}(\Omega) \to \mathbb{R}$ defined by
$$ \varphi_\lambda^\pm(u)= \int_\Omega \frac{1}{p(z)}|\nabla u(z)|^{p(z)}dz + \frac{1}{q}\|\nabla u\|_q^q + \int_\Omega \frac{\lambda}{\tau(z)}( u^\pm(z))^{\tau(z)}dz -  \int_\Omega F(z,\pm u^\pm)dz$$
for all $u \in W^{1,p(z)}_0(\Omega)$.  {\color{black} Recall $u^+=\max\{u,0\}$, $u^-=\max\{-u,0\}$.}\\

We can show that the functionals $ \varphi_\lambda^\pm(\cdot)$ and $ \varphi_\lambda(\cdot)$ satisfy the $C$-condition.

\begin{proposition}
	\label{prop3} If hypotheses $H_0$ and  $H_1$ hold and $\lambda>0$, then the functionals $ \varphi_\lambda^\pm(\cdot)$ and $ \varphi_\lambda(\cdot)$ satisfy the $C$-condition.
\end{proposition}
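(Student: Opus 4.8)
The plan is to verify the $C$-condition by the standard two-stage scheme: first establish that every Cerami sequence is bounded in $W_0^{1,p(z)}(\Omega)$, then upgrade weak convergence to strong convergence via the $(S)_+$ property of $A_{p(z)}$ from Proposition \ref{prop2}. I shall carry out the argument for $\varphi_\lambda(\cdot)$ in detail; the functionals $\varphi_\lambda^\pm(\cdot)$ are treated analogously, with the extra preliminary observation that testing $(\varphi_\lambda^\pm)'(u_n)$ against $\mp u_n^\mp$ annihilates the reaction terms on the irrelevant half and leaves $\rho_p(\nabla u_n^\mp)+\|\nabla u_n^\mp\|_q^q \leq \varepsilon_n$, so that $u_n^\mp \to 0$ and only the superlinear half remains to be controlled.

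Let $\{u_n\}$ be a Cerami sequence, so $|\varphi_\lambda(u_n)| \leq M_1$ and $(1+\|u_n\|)\varphi_\lambda'(u_n)\to 0$ in $W^{-1,p'(z)}(\Omega)$. The first and decisive step is boundedness. Testing the normalized derivative against $u_n$ gives $|\langle \varphi_\lambda'(u_n),u_n\rangle| \leq \varepsilon_n \to 0$, and forming $p_+\varphi_\lambda(u_n) - \langle \varphi_\lambda'(u_n),u_n\rangle$ the three convex contributions are nonnegative (since $p(z),q,\tau(z) \leq p_+$), whence
$$\int_\Omega \big[f(z,u_n)u_n - p_+F(z,u_n)\big]\,dz \leq p_+M_1 + \varepsilon_n.$$
Hypothesis $H_1(iii)$ yields a pointwise lower bound $f(z,x)x - p_+F(z,x) \geq c_1|x|^{\mu(z)} - c_2$ with $c_1>0$, so $\rho_\mu(u_n)$ stays bounded, and since $\mu_- \leq \mu(z)$ so does $\|u_n\|_{\mu_-}$.

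The crux is converting this $L^{\mu_-}$-bound into a $W_0^{1,p(z)}$-bound, which I expect to be the only genuinely delicate point. Dropping the nonnegative $q$- and $\tau$-terms in the energy and using $F(z,x) \leq c_3(1+|x|^{r(z)})$ from $H_1(i)$, it suffices to bound $\rho_r(u_n)$, and since $r(z)\leq r_+$ this reduces to bounding $\|u_n\|_{r_+}$. Choosing $t \in (0,1)$ with $\tfrac{1}{r_+} = \tfrac{1-t}{\mu_-} + \tfrac{t}{p_-^\ast}$ and applying the interpolation inequality $\|u_n\|_{r_+} \leq \|u_n\|_{\mu_-}^{1-t}\|u_n\|_{p_-^\ast}^t$ (when $\mu_- \geq r_+$ the bound is immediate from $L^{\mu_-}\hookrightarrow L^{r_+}$) together with the Sobolev embedding $W_0^{1,p(z)}(\Omega) \hookrightarrow L^{p_-^\ast}(\Omega)$, I obtain $\|u_n\|_{r_+}^{r_+} \leq c_5\|u_n\|^{tr_+}$. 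A direct computation shows that the exponent condition $tr_+ < p_-$ is \emph{equivalent} to $\mu_- > (r_+-p_-)\tfrac{N}{p_-}$, which is precisely the lower bound imposed in $H_1(iii)$. Assuming $\|u_n\| \to \infty$ and invoking Proposition \ref{prop1}(d), the energy inequality forces $\|u_n\|^{p_-}$ to grow no faster than $\|u_n\|^{tr_+}$ with $tr_+<p_-$, a contradiction; hence $\{u_n\}$ is bounded.

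Boundedness granted, pass to a subsequence with $u_n \xrightarrow{w} u$ in $W_0^{1,p(z)}(\Omega)$ and $u_n \to u$ in $L^{r(z)}(\Omega)$ and $L^{\tau(z)}(\Omega)$ by the compact embeddings (valid since $r(z) < p_-^\ast \leq p^\ast(z)$). From $\langle \varphi_\lambda'(u_n),u_n-u\rangle \to 0$, the reaction integral and the $\tau$-integral tend to $0$ (Hölder's inequality with $H_1(i)$ and strong $L^{r(z)}$-convergence), while $\liminf_n \langle A_q(u_n),u_n-u\rangle \geq 0$ by monotonicity of $A_q$ and the embedding $W_0^{1,p(z)}(\Omega)\hookrightarrow W_0^{1,q}(\Omega)$; hence $\limsup_n \langle A_{p(z)}(u_n),u_n-u\rangle \leq 0$. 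The $(S)_+$ property of $A_{p(z)}$ (Proposition \ref{prop2}) then yields $u_n \to u$ in $W_0^{1,p(z)}(\Omega)$, establishing the $C$-condition. The same three steps, with $u_n^\mp \to 0$ already in hand, deliver the claim for $\varphi_\lambda^\pm(\cdot)$.
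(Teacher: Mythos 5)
Your argument is correct and follows essentially the same route as the paper: annihilate $u_n^{\mp}$ by testing with $\mp u_n^{\mp}$, combine $p_+\varphi_\lambda(u_n)$ with $\langle\varphi_\lambda'(u_n),u_n\rangle$ to isolate $\int_\Omega [f(z,u_n)u_n-p_+F(z,u_n)]\,dz$, use $H_1(iii)$ for an $L^{\mu_-}$-bound, interpolate between $L^{\mu_-}$ and $L^{p_-^\ast}$ with the exponent check $tr_+<p_-$, and finish with the $(S)_+$ property of $A_{p(z)}$ after handling $A_q$ by monotonicity. The only cosmetic differences (bounding $\rho_p(\nabla u_n)$ from the energy bound rather than from $\langle\varphi_\lambda'(u_n),u_n\rangle$, and presenting $\varphi_\lambda$ rather than $\varphi_\lambda^+$ as the model case) do not change the substance.
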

\begin{proof} We shall present the proof for the functional $ \varphi_\lambda^+(\cdot)$, the proofs for $ \varphi_\lambda^-(\cdot)$ and $ \varphi_\lambda(\cdot)$ are similar. So,  consider a sequence $\{u_n\}_{n \in \mathbb{N}}\subseteq W^{1,p(z)}_0(\Omega)$ such that
	\begin{align}&|\varphi_\lambda^+(u_n) \leq c_1 | \mbox{ for some $c_1>0$ and all $n \in \mathbb{N}$,} \label{eq1}\\ & (1 + \|u_n\|) (\varphi_\lambda^+)^\prime(u_n) \to 0 \mbox{ in }W^{-1,p^\prime(z)}(\Omega)\mbox{  as } n \to \infty. \label{eq2}\end{align}
	
{\color{black} Referring to \eqref{eq-Az},} by \eqref{eq2} we have
\begin{equation}
\label{eq3} \left| \langle A_{p(z)}(u_n),h\rangle +\langle A_{q}(u_n),h\rangle + \int_\Omega \lambda (u_n^+)^{\tau(z)-1}h dz -\int_\Omega f(z,u_n^+)h dz \right| \leq \frac{\varepsilon_n\|h\|}{1+\|u_n\|}
\end{equation}	
for all $h \in W^{1,p(z)}_0(\Omega)$, with $\varepsilon_n \to 0^+$. 

In \eqref{eq3} we choose $h=-u_n^- \in W^{1,p(z)}_0(\Omega)$ and obtain
\begin{align}
& \rho_p(\nabla u_n^-)\leq \varepsilon_n \mbox{ for all $n \in \mathbb{N}$,}\nonumber \\ \Rightarrow \quad & u_n^- \to 0 \mbox{ in $W^{1,p(z)}_0(\Omega)$ as $n \to \infty$ (see Proposition \ref{prop1}).}  \label{eq4}
\end{align}
	
From \eqref{eq1} and \eqref{eq4} we have
\begin{equation}
\label{eq5} \rho_p(\nabla u_n^+)+\frac{p_+}{q}\|\nabla u_n^+\|_q^q
+ \int_\Omega \frac{\lambda p_+}{\tau(z)}(u_n^+)^{\tau(z)}dz- \int_\Omega p_+F(z,u_n^+)dz \leq c_2 \end{equation}	
for some $c_2>0$
and
all $n \in \mathbb{N}$.

Also, if in \eqref{eq3} we use the test function $h=u_n^+ \in W^{1,p(z)}_0(\Omega)$, we obtain
\begin{equation}
\label{eq6} -\rho_p(\nabla u_n^+)-\|\nabla u_n^+\|_q^q
- \int_\Omega \lambda (u_n^+)^{\tau(z)}dz+ \int_\Omega f(z,u_n^+)u_n^+dz \leq \varepsilon_n \mbox{ for  all $n \in \mathbb{N}$.}\end{equation}	

We add \eqref{eq5} and \eqref{eq6} and obtain
$$\int_\Omega [f(z,u_n^+)u_n^+-p_+F(z,u_n^+)]dz \leq c_3 \mbox{ 	
for some $c_3>0$
and
 all $n \in \mathbb{N}$.}$$

From hypothesis $H_1\,(iii)$ we see that we can always assume that $\mu_-<r_-$. Hypotheses  $H_1\,(i),(iii)$ imply that there exist $\widehat{\beta}_0 \in (0,\beta_0)$ and $c_4>0$ such that 
\begin{equation}
	\label{eq8} \widehat{\beta}_0|x|^{\mu_-}-c_4 \leq f(z,x)x-p_+F(z,x) \mbox{ for a.a. $z \in \Omega$ and
	 all $x \in \mathbb{R}$.}
\end{equation}

We use \eqref{eq8} in \eqref{eq6} and obtain 
\begin{align}
	\nonumber & \|u_n^+\|^{\mu_-}_{\mu_-}\leq c_5 \mbox{ for some $c_5>0$
	and
	 all $n \in \mathbb{N}$,}\\ \label{eq9} \Rightarrow \quad & \{u_n^+\}_{n \in \mathbb{N}} \subseteq L^{\mu_-}(\Omega) \mbox{ is bounded.}
	\end{align}

Recall that $\mu_- < r_-\leq r_+<p_-^\ast$. So, we can find $t \in (0,1)$ such that 
\begin{equation}
	\label{eq10} \frac{1}{r_+}=\frac{1-t}{\mu_-}+\frac{t}{p_-^\ast}.
\end{equation}

Using the interpolation inequality (see Papageorgiou-Winkert \cite{Ref18}, p. 116), we have
\begin{align}
	\nonumber & \|u_n^+\|_{r_+} \leq \| u_n\|_{\mu_-}^{1-t}\|u_n\|_{p_-^\ast}^t \mbox{ for all $n \in \mathbb{N}$,}\\ \label{eq11}  \Rightarrow \quad & \|u_n^+\|_{r_+}^{r_+} \leq c_6 \|u_n^+\|^{tr_+} \mbox{ for some $c_6>0$, all $n \in \mathbb{N}$ (see \eqref{eq9}).}
	\end{align}

Also, from \eqref{eq3} with $h = u_n^+ \in W_0^{1,p(z)}(\Omega)$, we have
$$\rho_p(\nabla u_n^+) \leq c_7 + \int_\Omega f(z,u_n^+)u_n^+ dz \mbox{ for some $c_7>0$
and
 all $n \in \mathbb{N}$.}$$

Without loss of generality, we may assume that $\|u_n^+\| \geq 1$. Using hypothesis $H_1(i)$ and Proposition \ref{prop1}, we have
\begin{align}
	\nonumber  \|u_n^+\|^{p_-} &\leq c_8 [1+\|u_n^+\|_{r_+}^{r_+}] \mbox{ for some $c_8>0$,}\\
	& \leq c_9 [1+\|u_n^+\|^{tr_+}] \mbox{ for some $c_9>0$
	and
	 all $n \in \mathbb{N}$ (see \eqref{eq11}).} \label{eq12}
	\end{align}

From \eqref{eq10} we have
\begin{align*}
&	tr_+ = \frac{p_-^\ast(r_+-\mu_-)}{p_-^\ast-\mu_-}<p_- \mbox{ (see hypothesis $H_1\,(iii)$), }\\ \Rightarrow \quad & \{u_n^+\}_{n\in \mathbb{N}}\subseteq W_0^{1,p(z)}(\Omega) \mbox{ is bounded (see \eqref{eq12}),}\\ \Rightarrow \quad & \{u_n\}_{n \in \mathbb{N}}\subseteq W_0^{1,p(z)}(\Omega) \mbox{ is bounded (see \eqref{eq4}).}
	\end{align*}

So, we may assume that
\begin{equation}
	\label{eq13}u_n \xrightarrow{w} u \mbox{  in  $W_0^{1,p(z)}(\Omega)$ and $u_n \to u$ in $L^{r(z)}(\Omega)$. }
\end{equation}

In \eqref{eq3} we choose $h=u_n-u \in W_0^{1,p(z)}(\Omega)$, pass to the limit as $n \to \infty$, and use \eqref{eq13}. We obtain 
\begin{align*}
	\nonumber & \lim_{n \to \infty}\left[ \langle A_{p(z)}(u_n),u_n-u\rangle +\langle A_{q}(u_n),u_n-u\rangle \right]=0,\\
	\Rightarrow \quad  & \nonumber \limsup_{n \to \infty}\left[ \langle A_{p(z)}(u_n),u_n-u\rangle +\langle A_{q}(u),u_n-u\rangle \right]\leq 0,\\
	\nonumber & \hskip 4cm \mbox{(since $A_q(\cdot)$ is monotone),}\\
	\Rightarrow \quad  & \nonumber \limsup_{n \to \infty}  \langle A_{p(z)}(u_n),u_n-u\rangle \leq 0 \quad \mbox{(see \eqref{eq13}),}\\ \Rightarrow \quad  & u_n \to u \quad \mbox{in $W_0^{1,p(z)}(\Omega)$ (see Proposition \ref{prop2}).}
\end{align*}

This proves that the functional $\varphi_\lambda^+(\cdot)$ satisfies the $C$-condition. In a similar fashion we show that 
$\varphi_\lambda^-(\cdot)$ and $\varphi_\lambda(\cdot)$ also satisfy the $C$-condition.
	\end{proof}

\section{Auxiliary propositions}\label{ap}

We shall prove two propositions needed for the proof of the main result.
\begin{proposition}
	\label{prop4} If hypotheses $H_0$
	and
	 $H_1$ hold and $\lambda>0$, then there exist $\rho_0,c_0>0$ such that $\varphi_\lambda^\pm(u)\geq c_0>0$ for all $u \in W_0^{1,p(z)}(\Omega)$, $\|u\|=\rho_0$.
\end{proposition}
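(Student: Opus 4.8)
The plan is to treat $\varphi_\lambda^+$ (the argument for $\varphi_\lambda^-$ being symmetric under $u \mapsto -u$, and $\varphi_\lambda$ is handled the same way), fix $\rho_0 \in (0,1)$ to be chosen small, and work on the sphere $\|u\| = \rho_0$. First I would discard the nonnegative term $\frac1q\|\nabla u\|_q^q$ and use $\frac1{p(z)} \ge \frac1{p_+}$ together with $\frac\lambda{\tau(z)} \ge \frac\lambda{\tau_+}$ to obtain
$$\varphi_\lambda^+(u) \ge \frac1{p_+}\rho_p(\nabla u) + \frac\lambda{\tau_+}\rho_\tau(u^+) - \int_\Omega F(z,u^+)\,dz.$$
On the sphere, Proposition \ref{prop1}(c) gives the clean lower bound $\rho_p(\nabla u) \ge \|u\|^{p_+} = \rho_0^{p_+}$, so the whole issue is to show that $\int_\Omega F(z,u^+)\,dz$ is absorbed by the two nonnegative modular terms on the right.

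Next I would quantify the size of $F$. Hypothesis $H_1(iv)$ yields, for every $\epsilon>0$, a bound $F(z,x) \le \frac{\widehat\eta+\epsilon}{q}|x|^q$ for $|x|$ small, and combined with the growth bound $H_1(i)$ this extends to $F(z,x) \le \frac{\widehat\eta+\epsilon}{q}|x|^q + c_\epsilon |x|^{r(z)}$ for a.a.\ $z \in \Omega$ and all $x \in \mathbb{R}$. The key structural fact is the exponent ordering $\tau(z) < q < r(z)$ for all $z$ (the left inequality is $H_0$; the right one follows from $q < p(z) < r(z)$). This lets me apply Young's inequality pointwise to write $|x|^q \le \delta |x|^{\tau(z)} + c(\delta)|x|^{r(z)}$ with $\delta$ as small as I like; choosing $\delta$ below a threshold proportional to $\lambda$ converts the $|x|^q$ part into at most half of the $\tau$-modular. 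After integrating I would obtain
$$\int_\Omega F(z,u^+)\,dz \le \frac{\lambda}{2\tau_+}\rho_\tau(u^+) + \widetilde c\,\rho_r(u^+),$$
so that half of the concave term cancels the sub-$q$ contribution of $F$ and only the genuinely superlinear piece $\widetilde c\,\rho_r(u^+)$ survives.

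The heart of the proof is then to absorb $\widetilde c\,\rho_r(u^+)$ into the remaining positive terms $\frac1{p_+}\rho_p(\nabla u)$ and $\frac{\lambda}{2\tau_+}\rho_\tau(u^+)$. Since $r(z) < p_-^*$, the embedding $W_0^{1,p(z)}(\Omega)\hookrightarrow L^{p_-^*}(\Omega)$ is available, and I would interpolate the $L^{r}$-scale of $u^+$ between the concave scale $L^{\tau}$ and the critical scale $L^{p_-^*}$ (as in the interpolation step used for Proposition \ref{prop3}, see \cite{Ref18}), and then apply Young's inequality once more. The outcome should be an estimate of the form $\widetilde c\,\rho_r(u^+) \le \frac{\lambda}{4\tau_+}\rho_\tau(u^+) + \frac1{2p_+}\rho_p(\nabla u^+)$, valid once $\|\nabla u^+\|_{p(z)}$ (hence $\rho_0$) is small enough. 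Feeding this back gives $\varphi_\lambda^+(u) \ge \frac1{2p_+}\rho_p(\nabla u) \ge \frac1{2p_+}\rho_0^{p_+} =: c_0 > 0$, which is the claim; note that both $\rho_0$ and $c_0$ depend on $\lambda$, consistent with the statement being restricted to small $\lambda$.

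I expect the interpolation--Young absorption of the previous paragraph to be the main obstacle. The naive route, bounding $\rho_r(u^+)$ directly by Sobolev as $\lesssim \rho_0^{r_-}$, is too lossy: the exponents satisfy only $p_- < r_-$, and one may well have $r_- \le p_+$, in which case $\rho_0^{r_-}$ dominates the available $\rho_0^{p_+}$ and the estimate collapses. The subcritical gap $r < p_-^*$ must therefore be exploited quantitatively, interpolating against the \emph{concave} exponent $\tau$ so as to manufacture a power of $\|\nabla u^+\|_{p(z)}$ strictly larger than $p_+$; only then does the superlinear remainder become genuinely higher order and absorbable. Keeping track of the passage between the variable-exponent modulars $\rho_\tau,\rho_r$ and the fixed Lebesgue exponents entering the interpolation inequality (via Proposition \ref{prop1}) is the technical price to pay.
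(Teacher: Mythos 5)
Your reduction and your treatment of the small-$|x|$ part of $F$ are, in substance, the paper's: the paper notes from $H_1\,(iv)$ and $\tau_+<q$ that $F(z,x)/|x|^{\tau(z)}\to 0$ as $x\to 0$, hence $F(z,x)\le \frac{\varepsilon}{\tau_+}|x|^{\tau(z)}+c|x|^{r_-}$ for $x \ge 0$, and then fixes $\varepsilon\in(0,\lambda)$ so that the concave modular swallows the $\varepsilon$-term --- exactly what your detour through $|x|^q$ plus pointwise Young achieves. The real divergence is at the end. The paper bounds the superlinear remainder by $c_{11}\|u\|^{r_-}$ via the Sobolev embedding, drops the (now nonnegative) $\tau$-modular altogether, and finishes from $\frac{1}{p_+}\|u\|^{p_+}-c_{11}\|u\|^{r_-}$ using $p_+<r_-$. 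You correctly observe that the pointwise hypothesis $p(z)<r(z)$ only yields $p_-<r_-$; but the fix is cheaper than interpolation. In $H_1\,(i)$ the exponent $r(\cdot)$ may be enlarged without loss of generality (the growth bound survives, with $2a$ in place of $a$), and since $r_+<p_-^\ast$ and also $p_+<p_-^\ast$ (evaluate $H_1\,(i)$ at a maximizer of $p$), one may take $r$ to be a constant in $(\max\{r_+,p_+\},\,p_-^\ast)$ for the purposes of this proposition; then $r_-=r>p_+$ and the ``naive'' comparison of $\rho_0^{p_+}$ against $\rho_0^{r_-}$ does close. This is what the paper's ``recall that $p_+<r_-$'' amounts to.

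Your proposed alternative ending --- interpolating $L^{r}$ between $L^{\tau}$ and $L^{p_-^\ast}$ and Young-splitting into the concave modular plus $\rho_p(\nabla u)$ --- is a legitimately different idea, but it is precisely the step you leave as a sketch, and it is where the variable exponents bite. With $t$ defined by $\frac{1}{r}=\frac{1-t}{\tau_-}+\frac{t}{p_-^\ast}$ (the only choice for which the interpolation inequality is directly available), the subsequent Young split needs conjugate exponents $a,a'$ with $(1-t)ra\ge\tau_+$ and $tra'>p_+$, i.e. $\frac{(1-t)r}{\tau_+}+\frac{tr}{p_+}>1$. When $\tau$ is constant this follows from $p_+<p_-^\ast$ and the identity $\frac{(1-t)r}{\tau}+\frac{tr}{p_-^\ast}=1$; when $\tau_-<\tau_+$ the first summand shrinks and the inequality is no longer automatic. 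So, as written, your route contains an unverified (and, without further normalization, not always valid) step, whereas the paper's route, once the harmless enlargement of $r$ is made explicit, is complete.
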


\begin{proof}
	On account of hypothesis $H_1 \, (iv)$, we have 
	\begin{equation}
	\label{eq14}\lim_{x \to 0^+} \frac{F(z,x)}{x^{\tau(z)}}= \lim_{x \to 0^+} \left[\frac{F(z,x)}{x^{q}}x^{q-\tau(z)}\right]= 0 \mbox{ (recall that $\tau_+<q$).}
	\end{equation}
	
Then \eqref{eq14} and hypothesis $H_1 \, (i)$ imply that given $\varepsilon>0$, we can find $c_{10}=c_{10}(\varepsilon)>0$ such that
\begin{equation*}
\label{eq15}F_+(z,x)\leq \frac{\varepsilon}{\tau_+}|x|^{\tau(z)}+c_{10}|x|^{r_-} \mbox{ for a.a. $z \in \Omega$ and all $x \in \mathbb{R}$.}
\end{equation*}	
	
	For $u \in W_0^{1,p(z)}(\Omega)$ with $\|u\|\leq 1$, we have 
	$$ \varphi_\lambda^+(u)\geq \frac{1}{p_+}\rho_p(\nabla u)+ \frac{1}{\tau_+}[\lambda -\varepsilon]\rho_\tau(u)-c_{11}\|u\|^{r_-} \mbox{ for some $c_{11}>0$ (since $\|u\|_{\tau(z)}\leq 1$).}$$	
	
Choosing $\varepsilon \in (0,\lambda)$ and recalling that $\|u\|\leq 1$, we have
	$$ \varphi_\lambda^+(u)\geq \frac{1}{p_+} \|u\|^{p_+}-c_{11}\|u\|^{r_-} \mbox{ (see Proposition \ref{prop1}).}$$
	
	Recall that $p_+< r_-$. So, by choosing $\rho_0 \in (0,1)$ sufficiently small, we obtain
		$$ \varphi_\lambda^+(u)\geq c_0>0 \mbox{ for all $u \in W_0^{1,p(z)}(\Omega)$, $\|u\|=\rho_0$.}$$
	
	Similarly for $\varphi_\lambda^-(\cdot)$.	
	\end{proof}

Recall that $\widehat{\lambda}_1(q)>0$ is the principal eigenvalue of $(-\Delta_q,W_0^{1,q}(\Omega))$. Also, by $\widehat{u}_1=\widehat{u}_1(q)$ we denote the corresponding positive $L^q$-normalized (that is, $\|\widehat{u}_1\|_q=1$) eigenfunction. We know that $\widehat{u}_1 \in C_0^1(\overline{\Omega})$ and $\widehat{u}_1(z)>0$ for all $z \in \Omega$ (see \cite{Ref6},  {\color{black} Theorem 6.2.9, p. 739}).

\begin{proposition}
	\label{prop5} If hypotheses $H_0$
	and
	$H_1$ hold, then there exist $\lambda^\ast>0$ and $t_\pm>0$ such that $\varphi_\lambda^\pm(\pm t_\pm  {\color{black}\widehat{u}_1})<0$ for all $\lambda \in (0,\lambda^\ast)$.
\end{proposition}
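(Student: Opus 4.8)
The plan is to exploit the $(p_+-1)$-superlinearity of $f(z,\cdot)$ encoded in hypothesis $H_1\,(ii)$ along the ray $t \mapsto \pm t\widehat{u}_1$. I will treat the ``$+$'' case in detail; the ``$-$'' case is symmetric, using the limit as $x \to -\infty$ in $H_1\,(ii)$. Since $\widehat{u}_1(z)>0$ for all $z \in \Omega$, for every $t>0$ we have $(t\widehat{u}_1)^+=t\widehat{u}_1$, so the positive truncation disappears and
$$\varphi_\lambda^+(t\widehat{u}_1)=\int_\Omega \frac{t^{p(z)}}{p(z)}|\nabla\widehat{u}_1|^{p(z)}dz+\frac{t^q}{q}\|\nabla\widehat{u}_1\|_q^q+\int_\Omega\frac{\lambda t^{\tau(z)}}{\tau(z)}\widehat{u}_1^{\tau(z)}dz-\int_\Omega F(z,t\widehat{u}_1)dz.$$
The idea is first to fix $t_+$ large so that the $\lambda$-independent part is strictly negative, and then to shrink $\lambda$ so that the (fixed, lower-order) concave term cannot restore positivity.

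For the key lower bound I would combine $H_1\,(ii)$ with the growth restriction $H_1\,(i)$. Given any $M>0$, the uniform limit in $H_1\,(ii)$ yields some $x_0=x_0(M)$ with $F(z,x)\geq M|x|^{p_+}$ for $x \geq x_0$ and a.a. $z$, while $H_1\,(i)$ bounds $|F(z,x)|$ on the compact range $0 \leq x \leq x_0$. Hence there is $C_M>0$ with
$$F(z,x)\geq M x^{p_+}-C_M \quad \text{for all } x \geq 0 \text{ and a.a. } z \in \Omega.$$
Taking $x=t\widehat{u}_1(z)$ and integrating gives $\int_\Omega F(z,t\widehat{u}_1)dz \geq M t^{p_+}\|\widehat{u}_1\|_{p_+}^{p_+}-C_M|\Omega|$, where $\|\widehat{u}_1\|_{p_+}^{p_+}=\int_\Omega\widehat{u}_1^{p_+}dz>0$ because $\widehat{u}_1>0$ is continuous. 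On the other hand, for $t \geq 1$ we have $t^{p(z)}\leq t^{p_+}$ and $t^{\tau(z)}\leq t^{\tau_+}$, so the anisotropic term is controlled by $\frac{t^{p_+}}{p_-}\rho_p(\nabla\widehat{u}_1)$ and the concave term by $\frac{\lambda}{\tau_-}t^{\tau_+}\rho_\tau(\widehat{u}_1)$, both with fixed constants.

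Putting these together, for $t \geq 1$,
$$\varphi_\lambda^+(t\widehat{u}_1)\leq \left[\frac{\rho_p(\nabla\widehat{u}_1)}{p_-}-M\|\widehat{u}_1\|_{p_+}^{p_+}\right]t^{p_+}+\frac{\|\nabla\widehat{u}_1\|_q^q}{q}t^q+\frac{\lambda\,\rho_\tau(\widehat{u}_1)}{\tau_-}t^{\tau_+}+C_M|\Omega|.$$
Since $\|\widehat{u}_1\|_{p_+}^{p_+}>0$, I choose $M$ so large that the bracket is negative; as $\tau_+<q<p_+$, the leading $t^{p_+}$ term then forces the $\lambda$-free part $g(t):=\varphi_0^+(t\widehat{u}_1)\to -\infty$ as $t \to +\infty$. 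Fix $t_+ \geq 1$ with $g(t_+)=-\delta<0$. Writing $K_+=\int_\Omega\frac{1}{\tau(z)}(t_+\widehat{u}_1)^{\tau(z)}dz>0$, we have $\varphi_\lambda^+(t_+\widehat{u}_1)=g(t_+)+\lambda K_+=-\delta+\lambda K_+$, which is negative for all $\lambda<\delta/K_+$. Running the symmetric argument for $\varphi_\lambda^-(-t_-\widehat{u}_1)$ produces $t_-$ and a threshold $\delta_-/K_-$; taking $\lambda^\ast=\min\{\delta/K_+,\delta_-/K_-\}$ finishes the proof.

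The main obstacle is the interaction between the superlinear growth, which is calibrated at the single exponent $p_+$, and the non-homogeneous anisotropic term, whose growth along the ray is $t^{p(z)}$ rather than a pure power. The device $t^{p(z)}\leq t^{p_+}$ (valid only for $t \geq 1$) is exactly what aligns the worst-case growth of the leading operator with the exponent $p_+$ appearing in $H_1\,(ii)$, so that the arbitrarily large constant $M$ can overwhelm the fixed coefficient $\rho_p(\nabla\widehat{u}_1)/p_-$; establishing the uniform-in-$z$ lower bound on $F$, and noting that $\widehat{u}_1 \in W_0^{1,p(z)}(\Omega)$ with $\rho_p(\nabla\widehat{u}_1)<\infty$ (true since $\widehat{u}_1 \in C_0^1(\overline{\Omega})$), are the supporting details.
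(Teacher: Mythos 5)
Your argument is correct as a proof of the proposition \emph{as literally stated}, but it takes a genuinely different route from the paper. You work at infinity: hypothesis $H_1\,(ii)$ gives $F(z,x)\geq M|x|^{p_+}-C_M$, the device $t^{p(z)}\leq t^{p_+}$ for $t\geq 1$ aligns the anisotropic term with the exponent $p_+$, and you obtain a \emph{fixed, large} $t_\pm$ (independent of $\lambda$) at which the $\lambda$-free part of $\varphi_\lambda^\pm$ is already negative; shrinking $\lambda$ then kills the concave contribution. The paper instead works near the origin: it uses $H_1\,(iv)$, namely the nonresonance condition $\eta\geq\widehat{\lambda}_1(q)$, $\eta\not\equiv\widehat{\lambda}_1(q)$, to make $\widehat{\mu}=\int_\Omega(\eta-\widehat{\lambda}_1(q))\widehat{u}_1^q\,dz>0$ dominate for small $t$, arriving at $\varphi_\lambda^+(t\widehat{u}_1)\leq[c_{13}(t^{p_--q}+\lambda t^{\tau_--q})-c_{14}]t^q$ and minimizing $\xi_\lambda(t)=t^{p_--q}+\lambda t^{\tau_--q}$, which yields $t_\pm=t_\pm(\lambda)=O(\lambda^{1/(p_--\tau_-)})\to 0^+$. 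The difference matters downstream: the Remark following the proposition records that $t_\pm\in(0,\rho_0)$ for small $\lambda$, and this is used in Proposition \ref{prop6} via \eqref{eq21}, where the test point $t_+\widehat{u}_1$ must lie \emph{inside} the small ball $\overline{B}_{\rho_0}$ (with $\rho_0\in(0,1)$ from Proposition \ref{prop4}) so that the constrained minimizer $u_0$ is nontrivial. Your $t_\pm\geq 1$ cannot satisfy this, so your construction, while proving the displayed statement, would not feed into the rest of the paper's argument; it also leaves the structural hypothesis $H_1\,(iv)$ (beyond the crude bound \eqref{eq25}) unused. What your approach buys is elementarity — no minimization of $\xi_\lambda$ and no appeal to the eigenvalue comparison — at the cost of losing control on the size of $t_\pm$.
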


\begin{proof}
		On account of hypotheses $H_1 \,(i), (iv)$, given $\varepsilon>0$, we can find $c_{12}=c_{12}(\varepsilon)>0$ such that 
		\begin{equation*}
		\label{eq16}F_+(z,x)\geq \frac{1}{q}[\eta(z)-\varepsilon]|x|^{q}-c_{12}|x|^{r_-} \mbox{ for a.a. $z \in \Omega$ and all $x \geq 0$.}
		\end{equation*}	
		
Then for $t \in (0,1]$ we have 
		$$ \varphi_\lambda^+(t \widehat{u}_1)\leq \frac{t^{p_-}}{p_-}\rho_p(\nabla \widehat{u}_1)+ \frac{t^q}{q}\left[ \int_\Omega (\widehat{\lambda}_1(q)-\eta(z))\widehat{u}_1^q dz+\varepsilon \right] + \frac{\lambda t^{\tau_-}}{\tau_-}\rho_\tau(\widehat{u}_1)+c_{12}t^{r_-}\|\widehat{u}_1\|^{r_-}_{r_-} .$$
		
As we have mentioned earlier, $\widehat{u}_1(z)>0$ for all $z \in \Omega$. This fact, combined with hypothesis $H_1\,(iv)$, implies that 
$$\widehat{\mu}=\int_\Omega (\eta(z)-\widehat{\lambda}_1(q))\widehat{u}_1^qdz>0.$$

So, choosing $\varepsilon \in (0,\widehat{\mu})$, we obtain
\begin{align}
\nonumber \varphi_\lambda(t \widehat{u}_1) & \leq c_{13}[t^{p_-}+\lambda t^{\tau_-}]-c_{14}t^q \mbox{ for some $c_{13},c_{14}>0$}\\ \label{eq17} & = [c_{13}(t^{p_--q}+\lambda t^{\tau_--q})-c_{14}]t^q.
\end{align}

Consider the function
$$\xi_\lambda(t)=t^{p_--q}+\lambda t^{\tau_--q} \mbox{ for $t>0$.}$$
		
Since $\tau_-<q<p_-$, we see that
$$\xi_\lambda(t)\to +\infty \mbox{ as $t \to 0^+$ and as $t \to +\infty$.}$$

Therefore there exists $t_+>0$ such that
		\begin{align}
\nonumber		&  \xi_\lambda(t_+)=\inf \{\xi_\lambda(t):t>0\},\\
	\nonumber 	\Rightarrow \quad & \xi^\prime_\lambda(t_+)=0,\\ \nonumber \Rightarrow \quad & (p_-q)t_+^{p_- - \tau_-}=\lambda (q-\tau_-),\\ \label{eq18} \Rightarrow \quad & t_+= \left[\frac{\lambda(q-\tau_-)}{p_--q}\right]^{\frac{1}{p_--\tau_-}}.
		\end{align}
		
Using \eqref{eq18}, we see that
$$\xi_\lambda (t_+)\to 0^+ \mbox{ as $\lambda \to 0^+$.}$$

Hence we can find $\lambda_1^\ast>0$ such that
\begin{align*}
& \xi_\lambda(t_+)<\frac{c_{14}}{c_{13}} \mbox{ for all $\lambda \in (0,\lambda_1^\ast)$,}\\ \Rightarrow \quad & \varphi_\lambda^+(t_+\widehat{u}_1)<0 \mbox{ for all $\lambda \in (0,\lambda_1^\ast)$ (see \eqref{eq17}).}
\end{align*}

Similarly working with $\varphi_\lambda^-(\cdot)$, we produce $\lambda_2^\ast>0$ and $t_->0$ such that
$$\varphi_\lambda^-(-t_-\widehat{u}_1)<0 \mbox{ for all $\lambda \in (0,\lambda_2^\ast)$.}$$

Finally let $\lambda^\ast =\min\{\lambda_1^\ast,\lambda_2^\ast\}$.
\end{proof}	

\begin{remark}
We can always choose $\lambda^\ast>0$ small so that
\begin{equation}
\label{eq19}t_\pm=t_\pm (\lambda)\in (0,\rho_0) \mbox{ for all $\lambda \in (0,\lambda^\ast)$ ($\rho_0>0$ is as in  {\color{black} Proposition \ref{prop4}}).}
\end{equation}
\end{remark}

\section{Proof of the main theorem}\label{sec3}
We shall break down the proof of Theorem \ref{th9} into two steps (5.1 and 5.2).

\subsection{Existence of two solutions}
First, we shall produce two nontrivial constant sign solutions. In what follows, we shall denote $C_+=\{u\in C_0^1(\overline{\Omega}): 0 \leq u(z) \mbox{ for all $z \in \overline{\Omega}$} \}$.

\begin{proposition}
	\label{prop6} If hypotheses $H_0$
	and $H_1$ hold and $\lambda \in (0,\lambda^\ast)$, then problem \eqref{eq0} has at least two constant sign solutions $u_0 \in C_+ \setminus \{0\}$, $v_0 \in (-C_+) \setminus \{0\}$ and both are local minimizers of the energy functional $\varphi_\lambda(\cdot)$.
\end{proposition}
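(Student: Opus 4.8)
The plan is to produce the positive solution $u_0$ as a minimizer of the truncated functional $\varphi_\lambda^+$ over a small closed ball, to obtain the negative solution $v_0$ symmetrically from $\varphi_\lambda^-$, and then to extract the constant sign and the local minimality with respect to the full functional $\varphi_\lambda$ by elementary arguments that deliberately avoid the strong maximum principle. Fix $\lambda \in (0,\lambda^\ast)$ and work on $\overline{B}_{\rho_0}=\{u \in W_0^{1,p(z)}(\Omega): \|u\|\le \rho_0\}$, with $\rho_0,c_0>0$ from Proposition \ref{prop4}. Since $W_0^{1,p(z)}(\Omega)$ is reflexive, $\overline{B}_{\rho_0}$ is sequentially weakly compact, and $\varphi_\lambda^+$ is sequentially weakly lower semicontinuous (the two gradient/modular terms are convex and continuous, while $\int_\Omega \frac{\lambda}{\tau(z)}(u^+)^{\tau(z)}dz$ and $\int_\Omega F(z,u^+)dz$ are weakly continuous via the compact embeddings of $W_0^{1,p(z)}(\Omega)$ into $L^{\tau(z)}(\Omega)$ and $L^{r(z)}(\Omega)$). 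Hence $\varphi_\lambda^+$ attains its infimum on $\overline{B}_{\rho_0}$ at some $u_0$. By Proposition \ref{prop5} and \eqref{eq19} the point $t_+\widehat{u}_1$ lies in the interior of $\overline{B}_{\rho_0}$ with $\varphi_\lambda^+(t_+\widehat{u}_1)<0$, so $\varphi_\lambda^+(u_0)<0$, whereas $\varphi_\lambda^+\ge c_0>0$ on $\partial B_{\rho_0}$ by Proposition \ref{prop4}. Therefore $\|u_0\|<\rho_0$, so $u_0$ is a local minimizer of $\varphi_\lambda^+$, and $\varphi_\lambda^+(u_0)<0=\varphi_\lambda^+(0)$ forces $u_0\ne 0$.

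Next I would establish the sign of $u_0$ and that it solves \eqref{eq0}. From $(\varphi_\lambda^+)'(u_0)=0$, testing with $h=-u_0^-\in W_0^{1,p(z)}(\Omega)$ and using $f(z,0)=0$ annihilates the reaction terms on $\{u_0<0\}$ and leaves $\rho_p(\nabla u_0^-)+\|\nabla u_0^-\|_q^q=0$, so $u_0^-=0$, i.e. $u_0\ge 0$. Then $u_0^+=u_0$ and $(\varphi_\lambda^+)'(u_0)=0$ is precisely the weak formulation of \eqref{eq0}; the anisotropic regularity theory upgrades $u_0$ to $C_0^1(\overline{\Omega})$, giving $u_0\in C_+\setminus\{0\}$. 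Running the same scheme for $\varphi_\lambda^-$ (testing with $h=v_0^+$) yields $v_0\in(-C_+)\setminus\{0\}$, a local minimizer of $\varphi_\lambda^-$ solving \eqref{eq0}.

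The decisive step, replacing the usual $C^1$--versus--Sobolev minimizer argument, is the identity
$$\varphi_\lambda(u)=\varphi_\lambda^+(u^+)+\varphi_\lambda^-(-u^-)\qquad\mbox{for all }u\in W_0^{1,p(z)}(\Omega),$$
obtained by splitting each integrand along $\{u\ge 0\}$ and $\{u<0\}$ and using $F(z,0)=0$. The estimate in the proof of Proposition \ref{prop4} also shows that $0$ is a local minimizer of both $\varphi_\lambda^\pm$, with $\varphi_\lambda^\pm(0)=0$. Now take $u$ close to $u_0$ in $W_0^{1,p(z)}(\Omega)$: since $u_0\ge 0$, continuity of the lattice operations makes $u^+$ close to $u_0$ and $-u^-$ close to $0$, so for $\|u-u_0\|$ small enough $\varphi_\lambda^+(u^+)\ge\varphi_\lambda^+(u_0)$ (local minimality of $u_0$) and $\varphi_\lambda^-(-u^-)\ge\varphi_\lambda^-(0)=0$ (local minimality of $0$). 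Combining this with the identity and with $\varphi_\lambda(u_0)=\varphi_\lambda^+(u_0)$ (valid since $u_0\ge 0$) gives $\varphi_\lambda(u)\ge\varphi_\lambda(u_0)$, i.e. $u_0$ is a local minimizer of $\varphi_\lambda$; the argument for $v_0$ is symmetric.

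I expect the main obstacle to be exactly this last transfer of local minimality to $\varphi_\lambda$. Because the negative concave term prevents $u_0$ from lying in the interior of $C_+$, the classical route through the equivalence of Hölder and Sobolev local minimizers is unavailable; the whole point is that the decomposition $\varphi_\lambda(u)=\varphi_\lambda^+(u^+)+\varphi_\lambda^-(-u^-)$, together with the fact that $0$ is itself a local minimizer of the truncated functionals, circumvents it cleanly. The remaining ingredients (weak lower semicontinuity, the sign test, the regularity upgrade, and continuity of the positive/negative part maps in $W_0^{1,p(z)}(\Omega)$) are routine once the geometry of Propositions \ref{prop4} and \ref{prop5} is in place.
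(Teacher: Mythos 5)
Your construction of $u_0$ and $v_0$ (direct minimization of $\varphi_\lambda^\pm$ over $\overline{B}_{\rho_0}$, the strict negativity at $\pm t_\pm\widehat{u}_1$ versus the positive bound on the sphere, the sign test with $h=-u_0^-$, and the regularity upgrade) coincides with the paper's argument. Where you genuinely diverge is in the transfer of local minimality to the full functional $\varphi_\lambda$. The paper works in the $C_0^1(\overline{\Omega})$ topology: for $u$ in a small $C_0^1$-ball around $u_0$ it bounds $\varphi_\lambda(u)-\varphi_\lambda(u_0)\geq\big[\tfrac{\lambda}{\tau_+}-c_{15}(\|u^-\|_\infty^{q-\tau_+}+\|u^-\|_\infty^{r_+-\tau_+})\big]\rho_\tau(u^-)\geq 0$, concluding that $u_0$ is a local $C_0^1$-minimizer of $\varphi_\lambda$, and then invokes the Sobolev-versus-H\"older local minimizer result (Proposition A.3 of \cite{Ref13}) to pass to the $W_0^{1,p(z)}(\Omega)$ topology. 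You instead use the exact decomposition $\varphi_\lambda(u)=\varphi_\lambda^+(u^+)+\varphi_\lambda^-(-u^-)$ (valid since $F(z,0)=0$ and $\nabla u$ vanishes a.e.\ on $\{u=0\}$), combine the Sobolev-local minimality of $u_0$ for $\varphi_\lambda^+$ with that of $0$ for $\varphi_\lambda^-$ (the latter read off from the estimate in Proposition \ref{prop4}), and conclude via the norm continuity of $u\mapsto u^\pm$. This is correct and works entirely in the Sobolev topology, so it bypasses the Sobolev-versus-H\"older theorem for this step (anisotropic regularity is still needed to place $u_0$ in $C_+$, but not for the minimality transfer); your identity is in fact the structural reason the paper's estimate closes, since the paper's lower bound $\tfrac{\lambda}{\tau_+}\rho_\tau(u^-)-\int_\Omega F(z,-u^-)\,dz$ is precisely $\varphi_\lambda^-(-u^-)$ stripped of its nonnegative gradient terms. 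The one ingredient you should not leave implicit is the continuity of the lattice operations $u\mapsto u^\pm$ on $W_0^{1,p(z)}(\Omega)$; it is true (Nemytskii operators of Lipschitz functions on Sobolev spaces, adapted to variable exponents) but deserves a precise reference, as it carries the weight that the paper assigns to Proposition A.3 of \cite{Ref13}.
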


\begin{proof}
We introduce the closed ball
$$\overline{B}_{\rho_0}=\{u \in W_0^{1,p(z)}(\Omega):\|u\|\leq \rho_0 \}$$
 {\color{black} with $\rho_0>0$ as in Proposition \ref{prop4} } and consider the minimization problem
\begin{equation}
\label{eq20} \inf \{\varphi_\lambda^+(u):u \in  \overline{B}_{\rho_0} \}=m_\lambda^+.
\end{equation}

The anisotropic Sobolev embedding theorem (see Section \ref{sec:2}), implies that $\varphi_\lambda^+(\cdot)$ is sequentially weakly lower semicontinuous. Also the reflexivity of $W_0^{1,p(z)}(\Omega)$ and the Eberlein-Smulian theorem  {\color{black} (see \cite{Ref18}, p. 221)} imply that $\overline{B}_{\rho_0}$ is sequentially weakly compact. So, by the Weierstrass-Tonelli theorem  {\color{black} (see \cite{Ref18}, p. 78)}, we can find $u_0 \in \overline{B}_{\rho_0}$ such that
\begin{align}
\label{eq21} &\varphi_\lambda^+(u_0)=m_\lambda^+ \leq \varphi_\lambda^+(t_+\widehat{u}_1)<0=\varphi_\lambda^+(0) \mbox{ (see \eqref{eq19}, \eqref{eq20} and Proposition \ref{prop5}),}\\ \nonumber \Rightarrow \quad & u_0 \neq 0.
\end{align}

From \eqref{eq21} and Proposition \ref{prop4}, we have
$$0<\|u_0\|<\rho_0.$$

Hence we have 
\begin{align}
\nonumber & (\varphi_\lambda^+)^\prime (u_0)=0,\\ \Rightarrow \quad & \langle A_{p(z)}(u_0),h \rangle +  \langle A_{q}(u_0),h \rangle = \int_\Omega f(z,u_0^+)h dz -\lambda \int_\Omega (u_0^+)^{\tau(z)-1} h dz \label{eq22}
\end{align}
for all $h \in W_0^{1,p(z)}(\Omega)$. In \eqref{eq22} we choose $h =-u_0^-\in W_0^{1,p(z)}(\Omega)$ and obtain 
\begin{align*}
& \rho_p(\nabla u_0^-)+\|\nabla u_0^-\|_q^q =0,\\ \Rightarrow \quad & u_0\geq 0, \, u_0 \neq 0.
\end{align*}

By Papageorgiou-R\u{a}dulescu-Zhang \cite[Proposition A.1]{Ref13}, we know that $u_0 \in L^\infty(\Omega)$. Then the anisotropic regularity theory (see Fan {\color{black}\cite[Theorem 1.3]{Ref4} } and Tan-Fang {\color{black}\cite[Corollary 3.1]{Ref20})} implies $u_0 \in C_+ \setminus \{0\}$. So, we have produced a positive smooth solution of \eqref{eq0} for $\lambda \in (0,\lambda^\ast)$. Similarly working with functional $\varphi_\lambda^-(\cdot)$, we produce a negative solution $v_0$ of \eqref{eq0} ($\lambda \in (0,\lambda^\ast)$) such that 
$$v_0 \in (-C_+) \setminus \{0\}.$$

Finally, we show that $u_0$ and $v_0$ are both local minimizers of the energy functional $\varphi_\lambda(\cdot)$. We shall present  the proof for $u_0$, the proof for $v_0$ is similar. From the first part of the proof, we know that $u_0$ is a local $C_0^1(\overline{\Omega})$-minimizer of $\varphi_\lambda^+(\cdot)$. So, we can find $\rho_1>0$ such that
\begin{equation}
\label{eq23} \varphi_\lambda^+(u_0)\leq \varphi_\lambda^+(u) \mbox{ for all $u \in \overline{B}_{\rho_1}^{C_0^1}(u_0)=\{u \in C_0^1(\overline{\Omega}): \|u-u_0\|_{C_0^1(\overline{\Omega})}\leq \rho_1 \}$}.
\end{equation}

For $u \in \overline{B}_{\rho_1}^{C_0^1}(u_0)$ we have
\begin{align}
\nonumber & \varphi_\lambda(u)-\varphi_\lambda(u_0)\\
\nonumber = & \, \varphi_\lambda(u)-\varphi_\lambda^+(u_0) \mbox{ (since $\varphi_\lambda \big|_{C_+}=\varphi_\lambda^+ \big|_{C_+} $)}\\
\nonumber \geq & \, \varphi_\lambda(u)-\varphi_\lambda^+(u) \mbox{ (see \eqref{eq23})}\\
\nonumber \geq & \frac{\lambda}{\tau_+} \int_\Omega [|u|^{\tau(z)}-(u^+)^{\tau(z)} ]dz - \int_\Omega [F(z,u)-F(z,u^+)]dz\\ \label{eq24} = & \, \frac{\lambda}{\tau_+} \rho_\tau (u^-)- \int_\Omega F(z,-u^-)dz.
\end{align}

	On account of hypotheses $H_1 \,(i), (iv)$ we can find $c_{15}>0$ such that 
\begin{equation}
\label{eq25}F(z,x)\leq c_{15}[|x|^{q}+|x|^{r_+}] \mbox{ for a.a. $z \in \Omega$ and all $x \in \mathbb{R}$.}
\end{equation}	

Using \eqref{eq25} in \eqref{eq24}, we obtain
\begin{align}
\nonumber \varphi_\lambda(u)-\varphi_\lambda(u_0) & \geq  \frac{\lambda}{\tau_+}\rho_\tau(u^-)-c_{15}\int_\Omega [(u^-)^q+(u^-)^{r_+}]dz\\
\label{eq26} & \geq \frac{\lambda}{\tau_+}\rho_\tau(u^-)-c_{15}\int_\Omega [\|u^-\|^{q-\tau(z)}_\infty+\|u^-\|^{r_+-\tau(z)}_\infty] (u^-)^{\tau(z)}dz.
\end{align}

Recall that $u_0 \in C_+ \setminus \{0\}$ and $u \in \overline{B}_{\rho_1}^{C_0^1}(u_0)$. So, by choosing $\rho_1>0$ even smaller if necessary, we can have that $\|u^-\|_\infty \leq 1$. Hence
\begin{equation}
\label{eq27} \|u^-\|^{q-\tau(z)}_\infty \leq \|u^-\|^{q-\tau_+}_\infty, \, \|u^-\|^{r_+-\tau(z)}_\infty \leq \|u^-\|^{r_+-\tau_+}_\infty.
\end{equation}

We return to \eqref{eq26} and use \eqref{eq27}. We obtain
$$\varphi_\lambda(u)-\varphi_\lambda(u_0)  \geq \left[\frac{\lambda}{\tau_+}  -c_{15}(\|u^-\|^{q-\tau_+}_\infty+ \|u^-\|^{r_+-\tau_+}_\infty ) \right]\rho_\tau(u^-).
$$

Note that $\|u^-\|_\infty \to 0^+$ as $\rho_1 \to 0^+$. Therefore we can choose $\rho_1>0$ so small that
$$\varphi_\lambda(u) \geq \varphi_\lambda(u_0) \mbox{ for all $u \in \overline{B}_{\rho_1}^{C_0^1}(u_0)$.}$$

This means that $u_0$ is a local $C_0^1(\overline{\Omega})$-minimizer of $\varphi_\lambda(\cdot)$. Then Proposition A.3 of Papageorgiou-R\u{a}dulescu-Zhang \cite{Ref13}, implies that $u_0$ is a local $W_0^{1,p(z)}(\Omega)$-minimizer of $\varphi_\lambda(\cdot)$. Similarly we show that $v_0 \in (-C_+)\setminus \{0\}$ is a local minimizer of the energy functional $\varphi_\lambda(\cdot)$.
\end{proof}

\begin{proposition}
	\label{prop7} If hypotheses $H_0$
	and
	 $H_1$ hold and $\lambda >0$, then $u=0$ is a local minimizer of the energy functional $\varphi_\lambda(\cdot)$.
\end{proposition}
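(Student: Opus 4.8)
The plan is to exploit the fact that, because the concave term enters the reaction with a negative sign, it contributes to the energy $\varphi_\lambda$ with a \emph{positive} sign, namely $+\int_\Omega \frac{\lambda}{\tau(z)}|u|^{\tau(z)}dz$. Since $\tau_+<q$, this positive contribution is of strictly lower order than the potentially troublesome $q$-th order term, and near $u=0$ it will dominate. As in Proposition~\ref{prop6}, I would first establish that $u=0$ is a local $C_0^1(\overline{\Omega})$-minimizer of $\varphi_\lambda(\cdot)$ and then invoke Proposition~A.3 of Papageorgiou-R\u{a}dulescu-Zhang \cite{Ref13} to upgrade this to a local $W_0^{1,p(z)}(\Omega)$-minimizer.

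For the $C_0^1$-estimate, I would first use hypotheses $H_1\,(i),(iv)$: from $\limsup_{x\to 0}\frac{qF(z,x)}{|x|^q}\leq\widehat{\eta}$ (uniformly for a.a. $z\in\Omega$), for every $\varepsilon>0$ there is $\delta\in(0,1]$ with $F(z,x)\leq\frac{1}{q}[\widehat{\eta}+\varepsilon]|x|^q$ for a.a. $z\in\Omega$ and all $|x|\leq\delta$. Restricting to $u\in C_0^1(\overline{\Omega})$ with $\|u\|_{C_0^1(\overline{\Omega})}\leq\rho$ for $\rho>0$ small enough that $\|u\|_\infty\leq\delta$, and discarding the nonnegative $p(z)$-term, I obtain
$$\varphi_\lambda(u)\geq \frac{1}{q}\|\nabla u\|_q^q + \frac{\lambda}{\tau_+}\rho_\tau(u) - \frac{1}{q}[\widehat{\eta}+\varepsilon]\|u\|_q^q.$$
The key point, and the main obstacle, is the competition in the $q$-th order terms: since hypothesis $H_1\,(iv)$ only guarantees $\eta(z)\geq\widehat{\lambda}_1(q)$ (so $\widehat{\eta}$ may exceed the principal eigenvalue), the combination $\frac{1}{q}\|\nabla u\|_q^q-\frac{1}{q}[\widehat{\eta}+\varepsilon]\|u\|_q^q$ need not be nonnegative. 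Using the variational characterization $\|\nabla u\|_q^q\geq\widehat{\lambda}_1(q)\|u\|_q^q$, this combination is bounded below by $-C\|u\|_q^q$ for some $C>0$.

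To close the argument I would absorb this negative term into the positive concave term. Since $\|u\|_\infty\leq\rho\leq 1$ and $\tau(z)<q$, the pointwise bound $|u(z)|^q=|u(z)|^{\tau(z)}|u(z)|^{q-\tau(z)}\leq\rho^{q-\tau_+}|u(z)|^{\tau(z)}$ holds a.e., whence $\|u\|_q^q\leq\rho^{q-\tau_+}\rho_\tau(u)$. Consequently $\varphi_\lambda(u)\geq\big[\frac{\lambda}{\tau_+}-C\rho^{q-\tau_+}\big]\rho_\tau(u)$, and choosing $\rho>0$ small (depending on $\lambda$, which is permissible) so that $C\rho^{q-\tau_+}\leq\frac{\lambda}{\tau_+}$ gives $\varphi_\lambda(u)\geq 0=\varphi_\lambda(0)$ for all such $u$. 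Thus $u=0$ is a local $C_0^1(\overline{\Omega})$-minimizer, and the Sobolev-versus-H\"older minimizer result then yields that it is a local $W_0^{1,p(z)}(\Omega)$-minimizer of $\varphi_\lambda(\cdot)$. I expect the only delicate point to be the $q$-th order near-resonance just described; the exponent gap $\tau_+<q$ is exactly what makes the positive sublinear term win near the origin, which is why passing through the $L^\infty$-controlled $C_0^1$-topology is the natural route.
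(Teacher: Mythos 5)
Your proposal is correct and follows essentially the same route as the paper: drop the nonnegative gradient terms, bound $F(z,u)$ by $C(\|u\|_\infty^{q-\tau_+}+\|u\|_\infty^{r_+-\tau_+})\rho_\tau(u)$ using the $C_0^1$-smallness, absorb this into the positive concave term $\frac{\lambda}{\tau_+}\rho_\tau(u)$, and then upgrade the local $C_0^1$-minimizer to a local $W_0^{1,p(z)}$-minimizer via \cite{Ref13}. The only difference is cosmetic: the paper uses the single global bound $F(z,x)\leq c_{15}[|x|^q+|x|^{r_+}]$ from \eqref{eq25} and discards \emph{both} gradient terms at once, so your detour through $\widehat{\lambda}_1(q)$ and the variational characterization is unnecessary (though harmless).
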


\begin{proof}
	Let $u \in C_0^1(\overline{\Omega})$ with  $\|u\|_{C_0^1(\overline{\Omega})}\leq 1$. We have
	\begin{align*}
	\varphi_\lambda(u)-\varphi_\lambda(0) & = \varphi_\lambda (u)\\
	& \geq \frac{\lambda}{\tau_+}\rho_\tau(u)-\int_\Omega F(z,u)dz \\
	& \geq \left[  \frac{\lambda}{\tau_+}-c_{15}(\|u\|^{q-\tau_+}_\infty+ \|u\|^{r_+-\tau_+}_\infty ) \right]\rho_\tau(u) \mbox{ (see \eqref{eq25}).}
	\end{align*}
	
	Choosing $\rho>0$ small enough, we see that 
	\begin{align*}
	& \varphi_\lambda(u) \geq 0=\varphi_\lambda(0) \mbox{ for all $u \in \overline{B}_{\rho}^{C_0^1}(0)$,}
	\\ \Rightarrow \quad  & u=0 \mbox{ is a local $C_0^1(\overline{\Omega})$-minimizer of  $\varphi_\lambda(\cdot)$,}
		\\ \Rightarrow \quad  & u=0 \mbox{ is a local $W_0^{1,p(z)}(\Omega)$-minimizer of  $\varphi_\lambda(\cdot)$ (see \cite{Ref13}).}
	\end{align*}
\end{proof}
\subsection{Existence of the third solution}
Now we are ready to produce the third nontrivial solution for problem \eqref{eq0}, $\lambda \in (0,\lambda^\ast)$.

	\begin{proposition}
		\label{prop8} If hypotheses $H_0$
		and  $H_1$ hold and $\lambda \in (0,\lambda^\ast)$, then problem \eqref{eq0} has the third solution $y_0 \in C_0^1(\overline{\Omega})$ and $y_0 \not \in \{0,u_0,v_0\}$.
	\end{proposition}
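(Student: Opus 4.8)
The plan is to exploit the three local minimizers already produced---$u_0 \in C_+\setminus\{0\}$ and $v_0 \in (-C_+)\setminus\{0\}$ from Proposition \ref{prop6}, together with $u=0$ from Proposition \ref{prop7}---and to obtain the third solution as a critical point of mountain-pass type for the full energy functional $\varphi_\lambda(\cdot)$, separating it from the three known ones by a critical-group computation. Since solutions of \eqref{eq0} are exactly the critical points of $\varphi_\lambda$, and every critical point lies in $C_0^1(\overline{\Omega})$ by the regularity argument used in Proposition \ref{prop6}, I may assume at the outset that the critical set $K_{\varphi_\lambda}$ is finite; otherwise $\varphi_\lambda$ has infinitely many nontrivial critical points and the conclusion holds trivially. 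In particular $u_0, v_0, 0$ are then isolated, and being local minimizers of $\varphi_\lambda$ they satisfy $C_k(\varphi_\lambda, u_0) = C_k(\varphi_\lambda, v_0) = C_k(\varphi_\lambda, 0) = \delta_{k,0}\mathbb{Z}$ for all $k \in \mathbb{N}_0$; in particular $C_1(\varphi_\lambda, \cdot) = 0$ at each of these three points.

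Next I would set up the mountain-pass geometry between $u_0$ and $v_0$; assume without loss of generality that $\varphi_\lambda(v_0) \leq \varphi_\lambda(u_0)$. Since $u_0$ is an isolated local minimizer of $\varphi_\lambda$, a standard fact in critical point theory (an isolated local minimizer of a $C^1$ functional satisfying the $C$-condition is strict on all sufficiently small spheres) yields $\rho \in (0, \|u_0 - v_0\|)$ with
$$\varphi_\lambda(v_0) \leq \varphi_\lambda(u_0) < \inf\{\varphi_\lambda(u) : \|u - u_0\| = \rho\} =: m_\rho.$$
As $\|v_0 - u_0\| > \rho$, this is exactly the mountain-pass geometry, and Proposition \ref{prop3} guarantees that $\varphi_\lambda(\cdot)$ satisfies the $C$-condition.

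I would then apply the mountain-pass theorem to obtain a critical point $y_0 \in K_{\varphi_\lambda}$ at the minimax level $c = \inf_{\gamma \in \Gamma}\max_{t \in [0,1]} \varphi_\lambda(\gamma(t)) \geq m_\rho$, where $\Gamma$ denotes the continuous paths in $W_0^{1,p(z)}(\Omega)$ joining $u_0$ to $v_0$. The decisive point is that such a minimax critical point carries a nontrivial first critical group: by the mountain-pass characterization of critical groups (Papageorgiou-R\u{a}dulescu-Repov\v{s} \cite{Ref11}, Chapter 6), one has $C_1(\varphi_\lambda, y_0) \neq 0$. Comparing with the first paragraph, $C_1(\varphi_\lambda, y_0) \neq 0 = C_1(\varphi_\lambda, u_0) = C_1(\varphi_\lambda, v_0) = C_1(\varphi_\lambda, 0)$ forces $y_0 \notin \{0, u_0, v_0\}$, so $y_0$ is a genuinely new nontrivial solution of \eqref{eq0}. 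Finally, the regularity argument already invoked in Proposition \ref{prop6}---the $L^\infty$-bound of Papageorgiou-R\u{a}dulescu-Zhang \cite[Proposition A.1]{Ref13} followed by the anisotropic regularity theory of Fan \cite{Ref4} and Tan-Fang \cite{Ref20}---gives $y_0 \in C_0^1(\overline{\Omega})$.

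The main obstacle is securing the strict sphere estimate $\varphi_\lambda(u_0) < m_\rho$ together with the conclusion $C_1(\varphi_\lambda, y_0) \neq 0$. Concretely, one must know that the minimax level $c$ is realized at a critical point which is not itself a local minimizer; reducing to a finite critical set makes every critical point isolated and lets the standard mountain-pass/critical-group machinery apply, but care is required so that the three already-known local minimizers do not exhaust $K_{\varphi_\lambda}$ at the level $c$. Once $C_1(\varphi_\lambda, y_0)\neq 0$ is established, disentangling $y_0$ from $0, u_0, v_0$ is immediate from the critical-group bookkeeping of the first paragraph.
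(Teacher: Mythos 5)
Your argument is correct and follows the same overall strategy as the paper: reduce to a finite critical set $K_{\varphi_\lambda}\subseteq C_0^1(\overline{\Omega})$, produce a mountain-pass critical point $y_0$ with $C_1(\varphi_\lambda,y_0)\neq 0$, separate it from the known solutions by comparing with the critical groups $\delta_{k,0}\mathbb{Z}$ of the isolated local minimizers, and finish with anisotropic regularity. The only real difference is the choice of mountain-pass geometry. You anchor the minimax between $u_0$ and $v_0$, invoking the strict small-sphere estimate around the isolated local minimizer $u_0$; the paper instead applies that same result (Theorem 5.7.6 of \cite{Ref11}) at the origin, using Proposition \ref{prop7} to get $0<\widehat{m}_\lambda=\inf\{\varphi_\lambda(u):\|u\|=\widehat{\rho}\}$, and uses the superlinearity hypothesis $H_1\,(ii)$ to supply a far endpoint with $\varphi_\lambda(tu)\to-\infty$. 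The paper's choice yields $y_0\neq 0$ directly from the energy level $\varphi_\lambda(y_0)\geq\widehat{m}_\lambda>0$, leaving only $y_0\neq u_0,v_0$ to the critical-group comparison; your choice yields $y_0\neq u_0,v_0$ from the minimax level (or the critical groups) but must exclude $y_0=0$ via $C_1(\varphi_\lambda,0)=0$, which you correctly do --- note that your level $c\geq m_\rho$ need not exceed $\varphi_\lambda(0)=0$ since $\varphi_\lambda(u_0)<0$, so the critical-group step there is genuinely needed. Both routes are complete; neither has a gap.
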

	
	\begin{proof}
	From the anisotropic regularity theory {\color{black} (see \cite{Ref4}, \cite{Ref20})}, we have that $K_{\varphi_\lambda} \subseteq C_0^1(\overline{\Omega})$. Since the critical points of $\varphi_\lambda(\cdot)$ are the weak solutions of \eqref{eq0}, we may assume that $K_{\varphi_\lambda}$ is finite or otherwise we would already have an infinity of nontrivial smooth solutions for \eqref{eq0} and so we would be done. Then Proposition \ref{prop7} and   \cite[Theorem 5.7.6, p. 449]{Ref11}, imply that we can find $\widehat{\rho}>0$ such that 
	\begin{equation}
	\label{eq28} \varphi_\lambda(0)=0< \inf \{\varphi_\lambda(u): \|u\|=\widehat{\rho}\}=\widehat{m}_\lambda.
	\end{equation}
	
Also, if $u \in C_+$ with $u(z)>0$ for all $z \in \Omega$, then on account of hypothesis $H_1\,(ii)$, we have 
\begin{equation}
\label{eq29} \varphi_\lambda(tu) \to -\infty \mbox{ as $t \to +\infty$.}
\end{equation}

Then \eqref{eq28}, \eqref{eq29} and Proposition \ref{prop3}, permit the use of the Mountain Pass Theorem {\color{black} (see \cite{Ref11}, p. 401)}. So, we can find $y_0 \in W_0^{1,p(z)}(\Omega)$ such that 
\begin{align*}
& y_0 \in K_{\varphi_\lambda}, \, \varphi_\lambda(0)=0 < \widehat{m}_\lambda \leq \varphi_\lambda(y_0),\\ \Rightarrow \quad & y_0 \neq 0.
\end{align*}

Moreover,  \cite[Corollary 6.6.9, p. 533]{Ref11} implies that 
\begin{equation}
\label{eq30} C_1(\varphi_\lambda,y_0)\neq 0.
\end{equation}

On the other hand from Proposition \ref{prop6}, we infer that 
\begin{equation}
\label{eq31} C_k(\varphi_\lambda,u_0)= C_k(\varphi_\lambda,v_0)=\delta_{k,0}\mathbb{Z} \mbox{ for all $k \in \mathbb{N}_0$.}
\end{equation}

Comparing \eqref{eq30} and \eqref{eq31}, we conclude that 
$$ y_0 \neq u_0, \, y_0 \neq v_0.$$

The anisotropic regularity theory implies that $y_0 \in C_0^1(\overline{\Omega})$.
	\end{proof}

This also completes the proof of Theorem \ref{th9}.\qed

\section*{Acknowledgements} The authors thank the referee for his/her remarks. Repov\v{s} was supported by the Slovenian Research Agency grants P1-0292, J1-4031, J1-4001, N1-0278, and N1-0114.

\end{document}